\numberwithin{equation}{section}  
\newtheorem{theorem}{Theorem}[section] 
\newtheorem{proposition}[theorem] {Proposition} 
\newtheorem{remark}[theorem]  {Remark}
\theoremstyle{definition}
\newtheorem{example}[theorem] {Example}
\DeclareMathAlphabet{\mathpzc}{OT1}{pzc}{m}{it}
\newcommand{\abs}[1]{\left| #1 \right|}
\DeclarePairedDelimiter{\norm}{\lVert}{\rVert}
\def\d{\delta} 
\newcommand{\e} {\varepsilon}
\def\l{\lambda}
\font\tenBbb=msbm10 
\font\sevenBbb=msbm7 
\font\fiveBbb=msbm5 
\newcommand{\R}     {\mathbb{R}} 
\newcommand{\Z}     {\mathbb{Z}} 
\newcommand{\N}     {\mathbb{N}} 
\renewcommand{\P}   {\mathbb{P}} 
\newcommand{\E}     {\mathbb{E}}
 \newcommand{\floor}[1]{\left\lfloor #1 \right\rfloor}
\def\1{{\mathchoice {1\mskip-4mu\mathrm l}      
{1\mskip-4mu\mathrm l} 
{1\mskip-4.5mu\mathrm l} {1\mskip-5mu\mathrm l}}} 
\def\comment#1{} 
\newtheoremstyle{thm}{2ex}{2ex}{\itshape\rmfamily}{} 
{\bfseries\rmfamily}{}{1.7ex}{} 
\newtheoremstyle{rem}{1.3ex}{1.3ex}{\rmfamily}{} 
{\itshape\rmfamily}{}{1.5ex}{} 
\newcommand{\Ocal}   {{\mathcal O }}
 \newcommand{\ex}{{\rm e}} 
\renewcommand{\d}{{\rm d}}
\newcommand{\Exp}{\mathscr{E}\kern-0.2mm{\operatorname{xp}}}
\newcommand{\Log}{\mathscr{L}\kern-0.2mm{\operatorname{og}}}
\renewcommand{\emptyset} {\varnothing}
\newcommand\NoBlackBoxes{\global\overfullrule0pt}
\newcommand{\gk}[1]{\left\{#1\right\}}
\newcommand{\Lh}{\hat{L}}
\renewcommand{\c}{{\mathfrak{c}}}
\newcommand{\Sh}{\widehat{S}}
\newcommand{\et}{\tilde{\e}}
\newcommand{\ek}[1]{\left[#1\right]}
\newcommand{\at}{\tilde{\alpha}}
\newcommand{\rk}[1]{\left(#1\right)}
\newcommand{\hk}[1]{^{(#1)}}
\begin{document}

\title[\hfill Quantitative bounds for large deviations of heavy tails\hfill]
{Quantitative bounds for large deviations of heavy tailed random variables}

\author[Quirin  Vogel]{Quirin  Vogel}
\address[Quirin  Vogel]{NYU Shanghai, 1555 Century Ave, Pudong, Shanghai, China, 200122\newline
Department of Mathematics, CIT, Technische Universität München, Boltzmannstr. 3, D-85748, Garching bei München, Germany.}
\email{quirinvogel@outlook.com}

\thanks{}
  

\subjclass[2010]{Primary: 60F10; Secondary: 60B10}
 
\keywords{}  
\begin{abstract}
The probability that the sum of independent, centered, identically distributed, heavy-tailed random variables achieves a very large value is asymptotically equal to the probability that there exists a single summand equalling that value. We quantify the error in this approximation. We furthermore characterise the law of the individual summands, conditioned on the sum being large.
\end{abstract}
 
 \maketitle
%

\section{Introduction and setting}
Large deviation theory concerns the study of random variables taking values away from their mean. A classic result in large deviation theory is that for $S_n=\sum_{i=1}^nX_i$ the sum of i.i.d., centred, integer-valued random variables $(X_i)_i$ with exponential tails, one has that for $x\in\R$
\begin{equation}
    \P\left(S_n>nx\right)=\ex^{-I(x)n\left(1+o(1)\right)}\quad\text{as}\quad n\to\infty.
\end{equation}
Here, $I(x)$ is the Legendre transform of the logarithmic moment generating function of $X_1$, i.e., in this case
\begin{equation}
    I(x)=\sup_{t\ge x}\left\{t x-\log\E\left[\ex^{tX_1}\right]\right\}\, .
\end{equation}
See \cite{dembo2009large} for more details. A follow-up task is the quantification of error- or higher-order terms. A classic result is given in \cite{bahadur1960deviations}, where it is shown that under certain conditions
\begin{equation}
    \P\left(S_n>nx\right)=\frac{\ex^{-I(x)n}}{\sigma\sqrt{n}}\left(1+\Ocal\left(n^{-1}\right)\right)\, ,
\end{equation}
for some $\sigma>0$. Indeed, one often can even give the stronger estimate
\begin{equation}
    \P\left(S_n=nx\right)=\frac{\ex^{-I(x)n}}{\sigma\sqrt{n}}\left(1+\Ocal\left(n^{-1}\right)\right)\, ,
\end{equation}
if $nx$ is in the support of $S_n$, see \cite{blackwell1959probability}. However, when one considers the case where the moment generating function does not exist, the behavior of $\P\left(S_n>nx\right)$ changes drastically. When the tails of $X_i$ decay polynomially (and sufficiently fast), Tchachkuk and Nagaev in \cite{tchachkuk1977limit,nagaev1982asymptotic} show that
\begin{equation}\label{EqIntro1}
     \P\left(S_n>nx\right)=n\P(X_1>nx)\left(1+o(1)\right)\, .
\end{equation}
Recently, Berger in \cite{berger2019notes} gave the improvement
\begin{equation}
     \abs{\frac{\P\left(S_n=nx\right)}{n\P(X_1=nx)}-1}=o(1)\, ,
\end{equation}
given some (mild) local conditions on the tail. There are similar results, for different distributions and cases, see for example \cite{MR2669760,Berger19Reneawal, MR4255236,berger2023collective}. 

Our first result considers the quantification of the error in \cite{berger2019notes}; we show that
\begin{equation}
    \abs{\frac{\P\left(S_n=nx\right)}{n\P(X_1=nx)}-1}=\Ocal\left(\e_n(x)\right)\, ,
\end{equation}
for some vanishing (in many cases explicit) sequence $\e_n(x)$, which depends on the distribution function of the $X_i$'s and on $x$. This is to our best knowledge the first quantification of such error terms in the heavy-tail regime. The Fuk--Nagaev inequality is a vital tool for our analysis, as in other works in this area (see \cite{nagaev1982asymptotic,berger2019notes} for example).\vspace{2mm}\\
Apart from computing the probability of a large deviation event, gaining insight in \textit{how} this deviation is achieved is an important part of large deviation theory. For random variables with existing moment generating function, this often goes by the name \textit{Gibbs-conditioning principle}, see \cite{dembo2009large}. Roughly speaking, the large exceedance is achieved by tilting the distribution of each $X_i$, so that the unlikely value becomes likely in the tilted distribution. The independence is asymptotically preserved. \\
For random variables with sub-exponential tails, the situation is starkly different: the large exceedance is achieved by one of the $X_i$'s assuming the large value, see Equation \eqref{EqIntro1}.

In \cite{armendariz2011conditional}, it was shown that the \textit{total variation} distance between the conditional distribution
\begin{equation}
    \P\left(\{X_i\}_{i=1}^n\in\,\cdot\,\big|S_n>nx\right)\, ,
\end{equation}
and its ``limiting'' distribution converges to zero. The ``limiting'' distribution is defined as follows: independently sample a random variable $Y$ with distribution $\P(Y\in A)=\P(X_1\in A|X_1>nx)$ and $(n-1)$-copies of $X_i$ (according to the original law). A position $i\in\{1,\ldots,n\}$ is sampled uniformly at random. The ``limiting'' law is given by the law of
\begin{equation}\label{eq:limitinglaw}
    \Big(X_1,\ldots,X_{i-1},Y,X_{i},\ldots,X_{n-1}\Big)\, .
\end{equation}
Our contribution to this question is twofold: not only do we quantify the speed of convergence but we also provide a deeper understanding of the conditional law by altering the law of $Y$. In \cite{armendariz2011conditional} the authors give two proofs of their result, one only working for positive random variables and one for the general case. The reason why their first proof breaks down in the general case is that it does not take into account the fluctuations induced by the $(n-1)$-copies of $X_1$. By modifying the law of $Y$, we get a new proof which works in general and also gives the \text{speed} of convergence. \vspace{2mm}\\
Expanding on our previous results, we can also give the limiting law of 
\begin{equation}
    \P\left(\{X_i\}_{i=1}^n\in\,\cdot\,\big|S_n=nx\right)\, .
\end{equation}
This case is interesting as the large value is no longer independent from the $n-1$-copies of $X_i$.

A word regarding the level of generality in this paper: this paper is a compromise between allowing for generality and keeping the notation easy to read. We chose to restrict ourselves to $\Z$-valued random variables with tails consisting of a power-law and a slowly varying function, as in \cite{berger2019notes}. However, similar to \cite{berger2019notes}, the modifications of the arguments (not the notation) needed to address the continuum case ($\R$-valued) are small.

There is a limit to the precision of our local expansion, related to the CLT scale $(a_n)_n$ of the underlying random variables. We introduce the notation
\begin{equation}
    f_n=\omega(g_n)\qquad\text{ if and only if}\qquad o(f_n)=g_n\, ,
\end{equation}
as $n\to\infty$. We furthermore write $f_n\sim g_n$ whenever $f_n=g_n(1+o(1))$, as $n\to\infty$.
\section{Results}
Let $\{X_i\}_i$ be an i.i.d. sequence of $\Z$-valued random variables such that for $x\in\N$
\begin{align}
        \P(X_1=x)&=p\alpha L(x)x^{-(1+\alpha)}\, ,\label{RightTail}\\
        \P(X_1=-x)&=q\alpha L(x)x^{-(1+\alpha)}\, \label{LeftTail}\, ,
\end{align}
for $L$ a slowly varying function, $p,q\ge  0$ with $p+q=1$, $\alpha\in (0,\infty)$. If $p=0$, we interpret $p\alpha L(x)x^{-(1+\alpha)}$ as $o\rk{L(x)x^{-(1+\alpha)}}$ and the same for $q=0$.

Recall that $L$ slowly varying means that $L(\l x)\sim L(x)$ for any $\l>0$, as $ x\to\infty$. One may think of $L(x)$ growing/shrinking slower than any polynomial. Note that the mean of $X_1$ exists for $\alpha>1$ and the variance exists for $\alpha>2$.

Suppose that there are two sequence $(a_n)_n$ and$(b_n)_n$ satisfying the following: for $\mu=\E\ek{X_1}$ and $\sigma^2(x)=\E\ek{(X_1-\mu)^2\1\gk{\abs{X_1-\mu}\le x}}$, assume that $\rk{a_n}_n$ satisfies
\begin{align}\label{Equation725}
    \begin{cases}
        L(a_n)(a_n)^{-\alpha} \sim  n^{-1}&\text{ if }\alpha \in (0,2)\, ,\\
        \sigma^2(a_n)a_n^{-2}\sim  n^{-1}&\text{ if }\alpha \ge 2\, ,
    \end{cases}
\end{align}
and that $\rk{b_n}_n$ is given by
\begin{equation}
    b_n=\begin{cases}
    0&\text{ if }\alpha\in (0,1)\, ,\\
    n\E\left[X_1\1\{\abs{X_1}\le a_n\}\right]&\text{ if }\alpha=1\, ,\\
    n\E[X_1]&\text{ if }\alpha>1\, .
    \end{cases}
\end{equation}
Let $S_n=\sum_{i=1}^nX_i$. Then, $S_n$ satisfies a central limit theorem with scales $(a_n)_n$ and $(b_n)_n$, i.e., one has that $\frac{S_n-b_n}{a_n}$ converges to a stable law, see \cite[IX.8, Eq. (8.14)]{felleb1968introduction}. We study the deviations from this central limit theorem.

Finally, we need to quantify how fast the function $L$ varies: we say that $L$ is slowly varying \textit{with precision} $\mathrm{err}[x,y]$ whenever
\begin{equation}
    L\big(x+y\big)=L(x)\left(1+\mathrm{err}[x,y]\right)\quad\text{as }\abs{x}\to \infty\, .
\end{equation}
for $\abs{y}=o(\abs{x})$ and for $o(1)=\mathrm{err}[x,y]$ some function.

Two examples of slowly varying functions are $L(x)=\log(x)^\beta$ and $L(x)=1+\Ocal\left(x^{-\alpha}\right)$, as $x\to\infty$. In the first case, one has that $\mathrm{err}[x,y]\sim \beta y/x$ and in the latter case one has $\mathrm{err}[x,y]=\Ocal\left(x^{-\alpha}\right)$.
\begin{theorem}\label{Thm1}
Suppose that $L$ is slowly varying with precision $\mathrm{err}[x,y]$. Assume Equation \eqref{RightTail} holds with $p>0$ and $\P(X_1<-x)\le \Ocal(1) L(x)x^{-\tilde{\alpha}}$ holds with some $\tilde{\alpha}\ge \alpha$ (as $x\to \infty$). Set $\Sh_n=S_n-\floor{b_n}$ and $\alpha_1=\frac{\alpha}{\alpha+1}\in (0,1)$. Write
\begin{equation}
    A(x,n)= \abs{\frac{\P(\Sh_n=x)}{n\P\left(X_1=x\right)}-1}\, .
\end{equation}
We then have that for every $\e>0$ small enough
\begin{enumerate}
    \item For $\alpha\in (0,2)$, we have that for all $0<x=\omega(a_n)\to\infty$
\begin{equation}
   A(x,n)=\Ocal\left(\left(\frac{a_n}{x}\right)^{(\alpha_1-\e)}+\mathrm{err}\ek{x,(a_n/x)^{\alpha_1}}\right)\, .
\end{equation}
\item For $\alpha=2$, we get that for all $0<x=\omega(a_n\sqrt{\log(n)})\to\infty$
\begin{equation}
   A(x,n)=\Ocal\left(\left(\frac{a_n\sqrt{\log(n)}}{x}\right)^{(\frac{2}{3}-\e)}+\mathrm{err}\ek{x,(a_n/x)^{\alpha_1}}\right)\, .
\end{equation}
\item For $\alpha> 2$, we assume that $x=\omega(\sqrt{n\log(n)})$ as $n\to\infty$. Set $\beta\ge 0$ such that $n^{-\beta}\rk{{x}/{\sqrt{n\log(n)}}}^{1-\alpha_1}\to\infty$ and $\beta\le \frac{(\alpha-2)(\alpha+1)}{2(2\alpha+1)}$. Then
\begin{equation}
     A(x,n)=\Ocal\left(n^{1-\alpha/2+\beta\alpha_1}\left(\frac{\sqrt{n\log(n)}}{x}\right)^{(\alpha_1-\e)}+\mathrm{err}\ek{x,(a_n/x)^{\alpha_1}}\right)\, .
\end{equation}
\end{enumerate}
See Remark \ref{RemarkGaussianCase} for the slightly stronger assumptions in the cases $\alpha\ge 2$.

Note that by symmetry, given Equation \eqref{LeftTail} the theorem also holds true for the limit $x\to-\infty$, with the respective assumption on the right tail. 
\end{theorem}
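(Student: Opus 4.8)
The plan is to decompose the event $\{\Sh_n = x\}$ according to whether one of the summands is ``large'' (close to $x$) or not. Fix a truncation level of the form $t_n = t_n(x) \asymp x \cdot (a_n/x)^{\alpha_1}$ (the exponent $\alpha_1 = \alpha/(\alpha+1)$ being the one that balances the two competing error sources below), and write $\Pscr$ for the event that exactly one $X_i$ exceeds $t_n$, $\Pscr_{\ge 2}$ for the event that at least two do, and $\Pscr_0$ for the event that none does. On $\Pscr_0$ the sum is a sum of truncated variables, and we must show $\P(\Sh_n = x,\, \Pscr_0)$ is negligible compared to $n\P(X_1 = x)$; on $\Pscr_{\ge 2}$ we use a union bound, $\P(\Pscr_{\ge 2}) \le \binom{n}{2}\P(X_1 > t_n)^2$, together with the constraint that the remaining mass must still reach $x$. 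The main term comes from $\Pscr$: by exchangeability,
\begin{equation}
    \P(\Sh_n = x,\, \Pscr) = n \sum_{y > t_n} \P(X_1 = y)\, \P\big(\Sh_{n-1} = x - y,\, \text{no large summand among the other } n-1\big),
\end{equation}
and one wants to show the inner probability, summed against $\P(X_1 = y)$, is $\P(X_1 = x)(1 + \Ocal(\e_n(x)))$.

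The key steps, in order: (i) Apply the Fuk--Nagaev inequality to control $\P(\Sh_{n-1}^{\le t_n} = z)$ uniformly for $z$ in the relevant window $x - y$ with $y \in (t_n, x + o(x))$ — this is where the CLT scale $a_n$ (resp. $a_n\sqrt{\log n}$ for $\alpha = 2$, $\sqrt{n\log n}$ for $\alpha > 2$) enters, since the truncated sum concentrates on that scale and a local/Gaussian-type bound gives $\P(\Sh_{n-1}^{\le t_n} = z) = \Ocal(1/a_{n-1})$ with the correct decay away from the bulk. (ii) Use the slow variation with precision to replace $\P(X_1 = x - w)$ by $\P(X_1 = x)(1 + \mathrm{err}[x, w] + \Ocal(w/x))$ for $|w| \le$ the fluctuation scale, which produces the $\mathrm{err}[x, (a_n/x)^{\alpha_1}]$ term and a polynomial term $(\text{scale}/x)$. (iii) Optimize the truncation $t_n$: the ``two big jumps'' error scales like $n^2 \P(X_1 > t_n)^2 / (n\P(X_1 = x)) \asymp n (t_n/x)^{-\alpha} \cdot (\dots)$ while the ``replacement'' error scales like $(t_n/x)$ times a fluctuation factor, and balancing gives $t_n \asymp x(a_n/x)^{\alpha_1}$, hence the exponent $\alpha_1 - \e$ (the $\e$ absorbing slowly varying factors and the Fuk--Nagaev logarithmic losses). (iv) Handle the lower bound $\P(\Sh_n = x) \ge \P(\Sh_n = x,\, \Pscr)$ together with an upper bound $\P(\Sh_n = x) \le \P(\Sh_n = x,\,\Pscr) + \P(\Pscr_{\ge 2}) + \P(\Sh_n = x,\,\Pscr_0)$, and show the last two are of the claimed order. (v) For $\alpha \ge 2$ the truncated variance $\sigma^2(a_n)$ replaces the naive second moment, the window must be widened by $\sqrt{\log n}$ to make the Fuk--Nagaev tail summable, and for $\alpha > 2$ the extra parameter $\beta$ appears because one truncates at $n^{\beta}$-dependent level to trade off the polynomial moment bound $\E|X_1^{\le t}|^{\alpha - \e}$ against the deviation probability; the constraints $\beta \le (\alpha-2)(\alpha+1)/(2(2\alpha+1))$ and $n^{-\beta}(x/\sqrt{n\log n})^{1-\alpha_1} \to \infty$ are exactly what keeps both pieces controlled.

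I expect the main obstacle to be step (i)--(ii) done \emph{jointly}: one needs a local estimate on $\P(\Sh_{n-1}^{\le t_n} = z)$ that is simultaneously (a) uniform over the whole range of $z = x - y$ as $y$ ranges over the large-jump window, including $z$ of order $x$ (far tail of the truncated sum, where naive Gaussian bounds are too weak and one must again invoke Fuk--Nagaev with the truncated moments), and (b) precise enough near $z \asymp a_n$ that summing $\sum_y \P(X_1 = y)\P(\Sh_{n-1}^{\le t_n} = x - y)$ against the slowly varying tail reproduces $\P(X_1 = x)$ with multiplicative error $1 + o(1)$ rather than merely $\Ocal(1)$. Getting the constant right — i.e. that the leading contribution is genuinely $n\P(X_1 = x)$ and not some other multiple — requires that $\sum_{z} \P(\Sh_{n-1}^{\le t_n} = z) = 1 - o(1)$ over the effective window, which is where the centering by $\floor{b_n}$ and the precise choice of $a_n$ in \eqref{Equation725} are used. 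The second obstacle, more bookkeeping than conceptual, is propagating the slowly-varying factors through the optimization so that they are all absorbed into the $\e$ in the exponent; this is routine but must be done carefully in the $\alpha = 2$ and $\alpha > 2$ cases where the extra logarithms and the $\beta$-truncation interact.
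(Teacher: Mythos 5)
Your proposal is, in substance, the paper's proof with the bookkeeping reorganized, and all the key ingredients match: a single-big-jump decomposition at the scale $x(a_n/x)^{\alpha_1}$, the local Fuk--Nagaev inequality for the part with no large summand, the slow-variation-with-precision replacement of $\P(X_1=y)$ by $\P(X_1=x)$, and the balance that produces $\alpha_1=\alpha/(\alpha+1)$ (with the $\sqrt{\log n}$ threshold and the extra $n^{-\beta}$ shrinkage of the window for $\alpha=2$ and $\alpha>2$). The paper partitions by the \emph{value of the maximum}: the main term comes from $M_n\in[(1-\e_n)x,(1+\e_n)x]$ with $\e_n=(a_n/x)^{\alpha_1}$ (union bound above, Bonferroni correction below), the intermediate range $M_n\in(\et_n x,(1-\e_n)x)$ is bounded by $n\sup_y\P(X_1=y)\cdot\P(S_{n-1}-\floor{b_n}\ge \e_n x)$ using non-local tail estimates (Berger, Doney, Berger--Birkner--Yuan), and $M_n\le \et_n x$ is killed by a dyadic sum of Fuk--Nagaev bounds; the error balance is $\Ocal(\e_n)+\mathrm{err}[x,\e_n x]$ against $nL(\e_n x)(\e_n x)^{-\alpha}$. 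Your partition by the \emph{number of exceedances} of the low threshold $t_n\asymp x(a_n/x)^{\alpha_1}$ is equivalent at the balance point, since $n\P(X_1>t_n)\asymp nL(\e_n x)(\e_n x)^{-\alpha}\asymp\e_n$, so your optimization lands on the same exponents.

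Two places where the sketch, as literally written, would not close. First, $\Pscr_{\ge 2}$: the plain bound $\binom{n}{2}\P(X_1>t_n)^2$ is useless against the local normalization, because $n\P(X_1>t_n)^2/\P(X_1=x)\asymp n\,t_n^{-2\alpha}x^{1+\alpha}$ (up to slowly varying factors), which diverges, e.g., for $\alpha\le 1$. You flag ``the constraint that the remaining mass must still reach $x$'', but it must be used quantitatively: on $\{\Sh_n=x\}\cap\Pscr_{\ge2}$ one of the two exceedances, or the truncated remainder, exceeds $x/3$; for a jump of that size use $\sup_{z\ge x/3}\P(X_1=z)=\Ocal\left(L(x)x^{-(1+\alpha)}\right)$ and sum out the rest, and for the remainder use Fuk--Nagaev with truncation $t_n$, whose exponent $x/t_n\to\infty$ beats any polynomial. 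This gives a contribution $n\P(X_1=x)\cdot\Ocal\left(n\P(X_1>t_n)\right)$, i.e., relative error $\Ocal(\e_n)$ up to slowly varying losses; it is exactly what the paper's region-$\mathrm{B}$ estimate and the subtracted (Bonferroni) term in its lower bound accomplish. Second, in your step (ii) the replacement window must have width of order $t_n=x(a_n/x)^{\alpha_1}$, not merely $a_n$: the sum truncated at $t_n$ fluctuates on a scale larger than $a_n$ (its variance is $\asymp n\,t_n^{2-\alpha}L(t_n)$ for $\alpha<2$), and the shoulder $a_n\ll|x-y|\le t_n$ has to be absorbed into the main term by performing the replacement over the whole window and using $\sum_z\P\left(\Sh_{n-1}=z,\,M_{n-1}\le t_n\right)\le 1$, rather than bounding those local probabilities individually; this is what yields the stated error $\Ocal(\e_n)+\mathrm{err}[x,\e_n x]$ and nothing larger. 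With these two points implemented, your route is sound and coincides with the paper's argument.
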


\begin{example}\label{Ex1}
    If $X_1$ is symmetric \textit{zeta}($1+\alpha$) distributed, i.e., for $k\in \Z\setminus\{0\}$
    \begin{equation}
        \P\left(X_1=k\right)=\frac{|k|^{-(1+\alpha)}}{2\zeta(1+\alpha)}\, .
    \end{equation}
    We then obtain that for all $\alpha>1$, $c>0$, $\e>0$ and for all $x\ge nc$
    \begin{equation}
        \P\left(\Sh_n=x\right)=n \P\left(X_1=x\right)\left(1+\Ocal\left(n^{-\frac{\alpha-1}{\alpha+1}\1\gk{\alpha\le 2}-\frac{\alpha}{2+2\alpha}\1\gk{\alpha>2}+\e}\right)\right)\, ,
    \end{equation}
    as $a_n=n^{\frac{1}{\alpha}\vee\frac{1}{2}}(1+\log(n)\1\gk{\alpha=2})$ and hence $(a_n/n)^{\alpha_1}$ is equal to $n^{-\frac{\alpha-1}{\alpha+1}}$ in the case $\alpha\le 2$ (ignoring the $\log(n)$ factor for $\alpha=2$) and $n^{-\frac{\alpha}{2+2\alpha}}$ for $\alpha>2$. 
    
    Note that for $\alpha>2$, Theorem gives a better error bound, depending on the value of $\beta$. For $\alpha\in (0,1]$, $a_n\ge n$ and hence one needs to choose larger $x$; we leave the details to the reader.
\end{example}
Next, we give a non-local version of Theorem \ref{Thm1}.
\begin{theorem}\label{ThmNonLocal}
Suppose that  $\{X_i\}_i$ is an i.i.d. sequence of $\Z$-valued random variables such that for $x\in\N$ and $\Lh(x)$ a slowly varying function
\begin{equation}\label{EquationNonLocalTail}
        \P(X_1\ge x)=p\alpha \Lh(x)x^{-\alpha}\, ,
\end{equation}
and that $\P(X_1\le -x)=\Ocal\rk{1} \Lh(x)x^{-\at}$ for some $\at\ge \alpha$. We then have that for $x$ satisfying the same conditions as in Theorem \ref{Thm1}
\begin{equation}
    \abs{\frac{\P(\Sh_n\ge x)}{n\P\left(X_1\ge x\right)}-1}=\Ocal\left(A(x,n)\right)\, ,
\end{equation}
where $A(x,n)$ is as before.
\end{theorem}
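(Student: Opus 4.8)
The plan is to deduce the estimate from Theorem~\ref{Thm1} by summing over the tail, after first reading the \emph{local} tail off the \emph{non-local} hypothesis \eqref{EquationNonLocalTail}. So the first step is to put $L(w):=(p\alpha)^{-1}w^{1+\alpha}\,\P(X_1=w)=(p\alpha)^{-1}w^{1+\alpha}\bigl(\Lh(w)w^{-\alpha}-\Lh(w+1)(w+1)^{-\alpha}\bigr)$, so that \eqref{RightTail} holds tautologically, and then to verify: (a) $L$ is slowly varying --- a Karamata-type differencing estimate gives $L(w)\sim\alpha\Lh(w)$ together with the control one needs on how the correction varies; (b) $A(x,n)$ built from the precision of $L$ and $A(x,n)$ built from the precision of $\Lh$ agree up to a multiplicative constant; and (c) the left tail transfers, namely $\P(X_1<-x)=\P(X_1\le -x-1)=\Ocal(1)\Lh(x)x^{-\at}=\Ocal(1)L(x)x^{-\at}$ since $L\asymp\Lh$. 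Then all hypotheses of Theorem~\ref{Thm1} hold on the stated range of $x$.

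The second step is the summation. Theorem~\ref{Thm1} gives $\P(\Sh_n=w)=n\,\P(X_1=w)\bigl(1+\Ocal(A(w,n))\bigr)$ for every $w\ge x$, and therefore
\begin{equation*}
    \P(\Sh_n\ge x)=\sum_{w\ge x}\P(\Sh_n=w)=n\,\P(X_1\ge x)+\Ocal\Bigl(\sum_{w\ge x}n\,\P(X_1=w)\,A(w,n)\Bigr).
\end{equation*}
It then remains to bound the error sum by $\Ocal\bigl(A(x,n)\,n\,\P(X_1\ge x)\bigr)$. The power-law part of $A(w,n)$ is a negative power of $w$ times a $w$-independent factor in each of the three cases, hence nonincreasing, so it contributes at most $A(x,n)\,n\,\P(X_1\ge x)$ to the sum. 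For the slowly varying part $\mathrm{err}[w,(a_n/w)^{\alpha_1}]$ one bounds it for $w\ge x$ by $\Ocal(\mathrm{err}[x,(a_n/x)^{\alpha_1}])$ using the precision of $L$, or else estimates the weighted sum directly by a Karamata-type argument. Putting the pieces together yields $\P(\Sh_n\ge x)=n\,\P(X_1\ge x)\bigl(1+\Ocal(A(x,n))\bigr)$.

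The step I expect to be the real obstacle is (a)--(b): checking that differencing the regularly varying tail \eqref{EquationNonLocalTail} really returns a slowly varying $L$ whose variation is no rougher than that of $\Lh$. This is routine for the standard examples ($\Lh=\log^\beta$ or $\Lh=1+\Ocal(x^{-\alpha})$) but genuinely breaks when, for instance, $X_1$ is supported on a proper sublattice, so that $\P(X_1=w)=0$ for infinitely many $w$ and Theorem~\ref{Thm1} does not apply in the first place; and even without that, controlling the error sum in the second step needs mild regularity of $\mathrm{err}[\,\cdot\,,\,\cdot\,]$. In those cases I would instead prove Theorem~\ref{ThmNonLocal} directly, re-running the one-big-jump decomposition and Fuk--Nagaev estimates underlying Theorem~\ref{Thm1} for the event $\{\Sh_n\ge x\}$ rather than the atom $\{\Sh_n=x\}$ (which only loosens every requirement): after a summation by parts the single-big-jump contribution becomes $n\sum_w\P(\Sh_{n-1}'=w)\,\P(X_1\ge x-w)$ with $\Sh_{n-1}'$ the truncated sum of the remaining summands, one expands $\P(X_1\ge x-w)/\P(X_1\ge x)=\Lh(x-w)\Lh(x)^{-1}(1-w/x)^{-\alpha}$ over the bulk of $\Sh_{n-1}'$ exactly as for the local theorem --- the linear-in-$w$ term cancelling thanks to the $b_n$-centring and the slowly varying factor producing the $\mathrm{err}$-term --- and the no-jump and multiple-jump pieces are again absorbed via Fuk--Nagaev.
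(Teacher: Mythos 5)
Your primary route---recovering the local tail from \eqref{EquationNonLocalTail} by differencing and then summing Theorem \ref{Thm1} over $w\ge x$---fails under the stated hypotheses, and not only in exotic cases: the assumption $\P(X_1\ge x)=p\alpha\Lh(x)x^{-\alpha}$ gives no control whatsoever on the individual probabilities $\P(X_1=w)$, so your step (a) cannot be verified from the hypotheses at all. This is exactly the point of the remark following the theorem in the paper: differencing gives $\P(X_1=x)=p\alpha x^{-\alpha}\bigl(\Lh(x)-\Lh(x+1)(1+1/x)^{-\alpha}\bigr)$, and $\abs{\Lh(x+1)-\Lh(x)}$ may be far larger than $\Ocal(x^{-1})$ (e.g.\ $\Lh(x)=1+(-1)^x\abs{x}^{-1/2}$), so the putative $L(w)=(p\alpha)^{-1}w^{1+\alpha}\P(X_1=w)$ need not be slowly varying, and sublattice-supported laws satisfy \eqref{EquationNonLocalTail} with $\P(X_1=w)=0$ infinitely often. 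Since Theorem \ref{ThmNonLocal} assumes only \eqref{EquationNonLocalTail}, the reduction to Theorem \ref{Thm1} is unavailable in general; what you present as a fallback for pathological cases must in fact be the proof.

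That fallback is essentially the paper's argument, and it is simpler than your sketch suggests. One splits $\P(\Sh_n\ge x)$ according to whether $M_n\ge(1-\e_n)x$ or $M_n<(1-\e_n)x$. On the big-jump event a union bound gives the upper bound $n\P\bigl(X_1>(1-\e_n)x\bigr)=n\P(X_1>x)\bigl(1+\mathrm{err}[x,\e_n x]+\Ocal(\e_n)\bigr)$, directly from the slow variation of $\Lh$; no expansion of $\P(X_1\ge x-w)$ over the bulk of the remaining summands, and no cancellation coming from the $b_n$-centring, is needed at this order of precision. The matching lower bound is the inclusion--exclusion step, exactly as in \eqref{EquationOmitted}. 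The complementary event $\{M_n<(1-\e_n)x\}$ is treated verbatim as the terms $\mathrm{B}$ and $\mathrm{C}$ in the proof of Theorem \ref{Thm1}, i.e.\ via the one-sided tail estimates and the Fuk--Nagaev bound \eqref{EqFukNag} applied to $\{\Sh_n\ge x,\ M_n\le y\}$, and produces the same error terms, whence the same $A(x,n)$. So: drop the differencing step, promote your direct argument to the main proof, and note that in it you only ever use the precision of $\Lh$, never a local hypothesis on $\P(X_1=w)$.
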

\begin{remark}
    Note that Theorem \ref{Thm1} cannot be deduced from Theorem \ref{ThmNonLocal} as
    \begin{equation}
        \P(X_1=x)=\P(X_1\ge x+1)-\P(X_1\ge x)=p\alpha x^{-\alpha}\rk{\Lh(x+1)\rk{1+1/x}^{-\alpha}-\Lh(x)}\, ,
    \end{equation}
    and $\abs{\Lh(x+1)-\Lh(x)}$ can be much larger than $\Ocal\rk{x^{-1}}$ (take for example $\Lh(x)=1+(-1)^{x}\abs{x}^{-1/2}$).
\end{remark}
As the largest value in the sequence $(X_1,\ldots,X_n)$ could appear at any spot, we introduce the following shift, which moves it to the last spot: let $T\colon \bigcup_{n\in\N}\R^n\to \bigcup_{n\in\N}\R^n$ with (set here $\max\emptyset=-\infty$)
\begin{equation}
    T(x_1,\ldots,x_n)_k=\begin{cases}
    \max_{1\le i\le n}x_i &\text{ when }k=n\, ,\\
    x_n &\text{ when }x_k>\max_{1\le i<k}x_i\text{ and }x_k=\max_{i\ge k}x_i\, ,\\
    x_k&\text{ otherwise}\, .
    \end{cases}
\end{equation}
Denote the law of $X_1$ by $\mu$. Write $F(x)=\mu\left((-\infty,x]\right)$ for the cumulative distribution function and $G(x)=1-F(x)$. Set $\nu_{x,n}=\P\left(\{X_i\}_{i=1,\ldots,n}\in\,\cdot\,|S_n>x\right)$, the distribution of the summands, conditional on $S_n$ large. Let $\nu_x$ be the\footnote{$\nu_x$ does depend on the choice of $\omega(a_n)$. However, this dependence is asymptotically negligible on most events $A$, as $\omega(a_n)=o(x)$.} distribution of $X_1$ conditional on being large:
\begin{equation}
    \nu_x(A)=\P(X_1\in A|X_1>x-\omega(a_n))\qquad\text{for}\qquad\omega(a_n)=o(x)\, ,
\end{equation}
where we recall that $\omega(a_n)$ is any sequence diverging faster than $(a_n)_n$. For the next theorem assume that $0<\omega(a_n)=o(x)$. We use $\norm{\,\cdot\,}$ to denote the \textit{total variation} norm.
\begin{theorem}\label{Thm2}
Assume that $X_1$ has mean zero (or $b_n=0$) and that $G(x+y)/G(x)=1+\mathrm{err}\hk{1}[x,y]$ as $x\to\infty $ and $y=o(x)$. Furthermore, set $c_{n,x}=\abs{\P(S_n\ge x)-n\P\left(X_1\ge x\right)}$. We then have that for $x=\omega(a_n)$ and $x\to\infty$
\begin{equation}\label{Eq:errorThm2}
    \norm{T[\nu_{x,n}]-\mu^{\otimes(n-1)}\otimes \nu_x}^2=\Ocal\left(\max\{\mathrm{err}\hk{1}[x,\omega(a_n)],c_{x,n},nG(x)\}\right)\, .
\end{equation}
In words, we can sample $\{X_i\}_{i=1,\ldots,n}$ conditioned on $S_n>x$ by
\begin{itemize}
\item sampling independently $\{ \tilde X_i\}_{i=1,\ldots,n-1}$ distributed according to $\mu^{\otimes(n-1)}$,
\item a position $i\in \{1,\ldots,n\}$ uniformly,
\item and $Y$ according to $\nu_x$
\end{itemize}
and have the distribution of $\{X_i\}_{i=1,\ldots,n}$ is approximately equal to $\left(\tilde X_1,\ldots,\tilde X_{i-1},Y,\tilde X_{i},\ldots,\tilde X_{n-1}\right)$, with  the error (in total variation norm) given by Equation \eqref{Eq:errorThm2}.
\end{theorem}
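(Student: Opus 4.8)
The plan is to turn the ``one big jump'' heuristic into a quantitative statement: on $\{S_n>x\}$ there is, with overwhelming probability, exactly one summand above a suitable threshold, that summand behaves like $\nu_x$, and the other $n-1$ summands are essentially genuinely i.i.d. The one real idea is to take the threshold to be $\tau=x-\omega(a_n)$ (rather than $x$), so that the fluctuations of the remaining $n-1$ summands --- only of order $a_n=o(\omega(a_n))$ --- do not spoil the picture; this is exactly the point at which the short, positive-variables-only argument of \cite{armendariz2011conditional} breaks. Let $\Omega_g\subset\Z^n$ be the event that exactly one coordinate lies strictly above $\tau$. On $\Omega_g$ that coordinate is the unique maximum, so $T$ moves it to the last slot and $T[\nu_{x,n}]$ restricted to $\Omega_g$ is an explicit measure. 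Writing $\rho:=\mu^{\otimes(n-1)}\otimes\nu_x$ for the target and $\bar S:=X_1+\dots+X_{n-1}$, I would split
\[
  \norm{T[\nu_{x,n}]-\rho}\ \le\ \nu_{x,n}(\Omega_g^c)\ +\ \norm{T[\nu_{x,n}](\,\cdot\mid\Omega_g)-\rho}\, ,
\]
handle the first term by the standard conditioning estimate, and treat the second by an $L^2$ (i.e.\ $\chi^2$-divergence) argument --- which is also what makes the final bound come out for the \emph{squared} total-variation norm.

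For the second term, on $\Omega_g$ the Radon--Nikodym density of $T[\nu_{x,n}]$ with respect to $\rho$ is
\[
  \frac{dT[\nu_{x,n}]}{d\rho}(x_1,\dots,x_{n-1},y)\ =\ \frac{n\,G(\tau)}{\P(S_n>x)}\,\1\{\bar S>x-y\}\, ,
\]
the factor $n$ recording the position of the big jump before applying $T$, and $G(\tau)$ being the normalisation of $\nu_x$ (the extra constraint $x_1,\dots,x_{n-1}\le\tau$ costs only $\Ocal(nG(\tau))$ and is absorbed into the $\Omega_g^c$ bookkeeping). By Theorem \ref{ThmNonLocal}, $\P(S_n>x)=nG(x)\rk{1+\Ocal(c_{x,n})}$, and by slow variation with precision, $G(\tau)=G(x)\rk{1+\Ocal(\mathrm{err}\hk1\ek{x,\omega(a_n)})}$, so the prefactor is $1+\Ocal(c_{x,n}+\mathrm{err}\hk1\ek{x,\omega(a_n)})$. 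Since the density is a constant times an indicator, all its moments are elementary, and computing $\E_\rho[(dT[\nu_{x,n}](\,\cdot\mid\Omega_g)/d\rho-1)^2]$ and using $\norm{P-Q}^2\le\tfrac14\chi^2(P\|Q)$ gives
\[
  \norm{T[\nu_{x,n}](\,\cdot\mid\Omega_g)-\rho}^2\ =\ \Ocal\!\left(c_{x,n}+\mathrm{err}\hk1\ek{x,\omega(a_n)}+\P_\rho(\bar S\le x-Y)\right),
\]
with $Y\sim\nu_x$ independent of $\bar S\sim\mu^{\otimes(n-1)}$. The last quantity I would bound by splitting on whether $Y>x+\omega(a_n)$: it is at most $\nu_x\big((\tau,\,x+\omega(a_n)]\big)+\P(\bar S<-\omega(a_n))$, the first piece being $1-G(x+\omega(a_n))/G(x-\omega(a_n))=\Ocal(\mathrm{err}\hk1\ek{x,\omega(a_n)})$, and the second being controlled by a Fuk--Nagaev / one-big-jump bound for the left tail --- this is where $\P(X_1\le -u)\le\Ocal(1)L(u)u^{-\tilde\alpha}$ is used --- which I would arrange (by the regime-dependent choice of $\omega(a_n)$) to be $\Ocal(nG(x))$.

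For the reduction term, $\nu_{x,n}(\Omega_g^c)=\P(S_n>x)^{-1}\big[\P(S_n>x,\,\#\{i:X_i>\tau\}\ge2)+\P(S_n>x,\,\max_i X_i\le\tau)\big]$: the first bracket term is $\le\binom n2 G(\tau)^2=\Ocal((nG(x))^2)$, and the second is the ``stripped sum'' large-deviation estimate underlying Theorem \ref{ThmNonLocal}, hence $\Ocal(c_{x,n}\cdot nG(x)+(nG(x))^2)$; dividing by $\P(S_n>x)\sim nG(x)$ yields $\nu_{x,n}(\Omega_g^c)=\Ocal(nG(x)+c_{x,n})$. (One also uses $\rho(\Omega_g^c)=\P(\exists i\le n-1:X_i>\tau)\le(n-1)G(\tau)=\Ocal(nG(x))$, which enters the $\chi^2$ term.) Collecting the pieces gives $\norm{T[\nu_{x,n}]-\rho}^2=\Ocal\big(\max\{\mathrm{err}\hk1\ek{x,\omega(a_n)},c_{x,n},nG(x)\}\big)$.

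The main obstacle is the control of $\P(\bar S<-\omega(a_n))$ and, behind it, of the conditioning of the $n-1$ small summands on $\{\bar S>x-Y\}$: this cancellation effect is precisely what invalidates the first proof of \cite{armendariz2011conditional}, and the whole purpose of conditioning $Y$ to exceed $x-\omega(a_n)$ rather than $x$ is to make $\{\bar S>x-Y\}$ a high-probability event with a usable rate. Keeping that rate no larger than $nG(x)$ forces a balance in the choice of $\omega(a_n)$ (diverging faster than $a_n$, but slowly enough that $nG(x)$ still dominates the left-tail deviations) and uses the stable-law central limit theorem for $\bar S$; the remainder is bookkeeping around Theorem \ref{ThmNonLocal} and the slow-variation precision, together with the $\chi^2$ step that produces the square on the left-hand side.
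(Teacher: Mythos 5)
Your route is genuinely different from the paper's: the paper quantifies the entropy (``second'') proof of Armend\'ariz--Loulakis, bounding $\norm{\nu_{x,n}-\nu_{x,n}^*}^2$ by $H(\nu_{x,n}|\mu^{\otimes n})+H(\nu_{x,n}^*|\mu^{\otimes n})-2H\big(\tfrac{\nu_{x,n}+\nu_{x,n}^*}{2}\big|\mu^{\otimes n}\big)$ via Pinsker and Csisz\'ar's parallelogram identity, so that everything reduces to the counting variable $N_{x,n}$ and the normalisations $G_n(x)$ and $nG(x^-)$; you instead quantify the direct ``one big jump'' proof through a good event and explicit densities. That is a legitimate alternative in principle, but as written it has a concrete gap at exactly the step you yourself flag as the main obstacle.

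The gap is the treatment of $\P_\rho(\bar S\le x-Y)$. First, $\omega(a_n)$ is not yours to choose: it is part of the statement (it defines $\nu_x$ and enters the error only through $\mathrm{err}\hk{1}\ek{x,\omega(a_n)}$), so you cannot ``arrange'' its size. Second, for a general admissible $\omega(a_n)$ the bound $\P(\bar S<-\omega(a_n))=\Ocal(nG(x))$ is simply false: for the symmetric zeta law with $\alpha=3/2$, $x\asymp n$, $\omega(a_n)=a_n\log n$, one has $\P(\bar S<-\omega(a_n))\asymp n\,\omega(a_n)^{-3/2}\asymp(\log n)^{-3/2}$ while $nG(x)\asymp n^{-1/2}$ and the claimed maximum is $\asymp n^{-1/5}$; even after squaring, your estimate does not fit under the right-hand side of \eqref{Eq:errorThm2}. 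The loss comes from the crude split on $\{Y\le x+\omega(a_n)\}$ versus $\{Y>x+\omega(a_n)\}$: on the second event you replace $\P(\bar S\le x-Y)$ by its worst value, whereas under $\nu_x$ the overshoot $Y-x$ is spread over scale $x$, so one must integrate against its law, getting a bound of the shape $\mathrm{err}\hk{1}\ek{x,\omega(a_n)}+\tfrac nx\sum_{u\ge\omega(a_n)}G^-(u)+nG(x)$ (with $G^-$ the left tail), which can then be brought under the claimed maximum using $\tilde\alpha\ge\alpha$ and $nG^-(\omega(a_n))=o(1)$ --- but this averaging argument is missing from your proposal. The same issue affects your bound on $\nu_{x,n}(\Omega_g^c)$: the stripped-sum estimates behind Theorem \ref{ThmNonLocal} control $\{M_n\le(1-\e_n)x\}$ with $\e_nx$ of a specific size that is typically much larger than $\omega(a_n)$, so configurations whose maximum lies in the window $(x-\e_nx,\,x-\omega(a_n)]$ need a separate window estimate of the same averaged type, not a citation. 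The paper's entropy route sidesteps these fluctuation estimates almost entirely, which is precisely what the parallelogram identity buys; if you keep your direct route, you must supply the two averaging/window estimates above (and note that, since your density is a constant times an indicator, the $\chi^2$ detour is unnecessary: the conditional total variation distance equals $\rho(A^c)$ exactly, the square in \eqref{Eq:errorThm2} coming in the paper from Pinsker).
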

\begin{example}
    In the setting of Example \ref{Ex1} with $\alpha=3/2$, we have that for $x>0$ of order $n$
    \begin{equation}
    \norm{T[\nu_{x,n}]-\mu^{\otimes(n-1)}\otimes \nu_x}^2=\Ocal\left(n^{-1/5+\e}\right)\, .
\end{equation}
This allows us to give statements such as: for $A_1,\ldots,A_n$ measurable subsets of $\R$, we get that
\begin{equation}
    \P\left(T[X_1,\ldots,X_n]\in A_1\times\ldots\times A_n|S_n>x\right)\sim \nu_x(A_n)\prod_{i=1}^{n-1}\mu(A_i)\, , 
\end{equation}
as long as the right-hand side has a probability of $\omega\left(n^{-1/10+\e/2}\right)$. This cannot be concluded from the $o(1)$ bounds in \cite{armendariz2011conditional}.
\end{example}
Denote
\begin{equation}
    \xi_{x,n}=\P\left(\{X_i\}_{i=1,\ldots,n}\in\,\cdot\,|S_n=x\right)\, .
\end{equation}
We also set $\xi_{x,n}^*$ the measure given by
\begin{equation}
    \xi_{x,n}^*=\int \d\mu^{\otimes(n-1)}(y)\delta_{x-\sum_{i=1}^{n-1}y_i}\, .
\end{equation}
In words, $\xi_{x,n}^*$ samples $(y_1,\ldots,y_{n-1})$ i.i.d. according to $\mu$ and then sets the final coordinate as $x-\sum_{i=1}^{n-1}y_i$. 
\begin{theorem}\label{Thm3}
Assume that $X_1$ has mean zero (or $b_n=0$) and that $G(x+y)/G(x)=1+\mathrm{err}\hk{2}(x,y)$ as $x\to\infty $ and $y=o(x)$. Set $c_{n,x}=\abs{\P(S_n=x)-n\P\left(X_1=x\right)}$. We then have that for $x=\omega(a_n)$ in the support of $S_n$
\begin{equation}
    \norm{T[ \xi_{x,n}]-\xi_{x,n}^*}^2=\Ocal\left(\max\left\{\mathrm{err}\hk{2}[\omega(x,a_n)],c_{x,n},nG(x)\right\}\right)\quad\text{as}\quad x\to\infty\, .
\end{equation}
\end{theorem}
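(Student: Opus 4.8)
The plan is to prove Theorem~\ref{Thm3} as the local counterpart of Theorem~\ref{Thm2}, the essential new point being that conditioning on $\{S_n=x\}$ (rather than on $\{S_n>x\}$) pins the exceptional coordinate to $x$ minus the fluctuation of the other $n-1$ coordinates, which is exactly what the reference measure $\xi^*_{x,n}$ builds in. The first step is a symmetrisation of $T[\xi_{x,n}]$. Since $T$ is the retraction of $\Z^n$ onto $\mathcal M:=\gk{z:z_n=\max_i z_i}$ and $\mu^{\otimes n}$ (hence $\xi_{x,n}$) is exchangeable, for $z\in\mathcal M$ with a \emph{unique} maximiser the fibre $T^{-1}(\gk z)$ consists of exactly the $n$ configurations obtained from $z$ by transposing a coordinate with the last, all of the same $\mu^{\otimes n}$-mass; hence
\[
  T[\xi_{x,n}]\;=\;\frac{n}{\P(\Sh_n=x)}\sum_{z\in\mathcal M:\ \sum_i z_i=x}\Big(\prod_{i=1}^n\mu(z_i)\Big)\,\delta_z\;+\;\mathrm R_n ,
\]
where $\mathrm R_n$ collects the contribution of configurations with a tie at the maximum. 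A one-big-jump estimate combined with the Fuk--Nagaev inequality gives $\norm{\mathrm R_n}=\Ocal(nG(x))$: the only scenario not already super-polynomially small forces two summands near $x/2$, of probability $\Ocal((nG(x))^2)$.

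From the displayed identity one reads off that $T[\xi_{x,n}]-\mathrm R_n$ is absolutely continuous with respect to $\xi^*_{x,n}$, with density at a point $(y_1,\dots,y_{n-1},x-\textstyle\sum_{i<n}y_i)$ equal to
\[
  D(y)\;:=\;\frac{n\,\P\big(X_1=x-\textstyle\sum_{i<n}y_i\big)}{\P(\Sh_n=x)}\;\1\Big\{x-\textstyle\sum_{i<n}y_i\ \ge\ \max_{i<n}y_i\Big\}.
\]
Writing $H=\sum_{i=1}^{n-1}\tilde X_i$ for an i.i.d.\ sample $(\tilde X_i)$ of law $\mu$, one has $\norm{T[\xi_{x,n}]-\xi^*_{x,n}}=\tfrac12\,\E\big[\,\abs{D(\tilde X)-1}\,\big]+\Ocal(nG(x))$. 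The key a priori input I would establish is the second moment bound $\E\big[D(\tilde X)^2\big]=\Ocal(1)$: splitting on whether $(\tilde X_i)_{i<n}$ has a big jump, the ``no jump'' contribution is $\asymp\P(X_1=x)^2$, while a jump of size $g$ contributes $\asymp n\sum_{g\gtrsim a_n}\mu(g)\,\P(X_1=x-g)^2\1\gk{g\le x/2}\asymp\P(X_1=x)^2$ (using $\sum_{g\gtrsim a_n}\mu(g)\asymp n^{-1}$), where the cutoff $g\le x/2$ is forced by the indicator in $D$ — a jump of intermediate size $g\in(x/2,x)$ would make $x-H<g=\max_{i<n}\tilde X_i$. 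Dividing by $\P(\Sh_n=x)^2\asymp(n\P(X_1=x))^2$ gives $\Ocal(1)$.

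Next I would split $\E\big[\abs{D-1}\big]$ over the good event $\mathcal G=\gk{\abs H\le\omega(a_n)}\cap\gk{\max_{i<n}\tilde X_i\le x/2}$ and its complement. On $\mathcal G$ the indicator in $D$ is automatically $1$, and the hypothesis $G(x+y)/G(x)=1+\mathrm{err}\hk{2}(x,y)$ (which, up to constants, also controls $\P(X_1=x+y)/\P(X_1=x)$ for $\abs y=o(x)$ under the tail assumption) gives $\P(X_1=x-H)/\P(X_1=x)=1+\Ocal(\mathrm{err}\hk{2}[x,\omega(a_n)])$; combined with the local limit estimate $\P(\Sh_n=x)=n\P(X_1=x)(1+\Ocal(c_{x,n}))$ of Theorem~\ref{Thm1}, this yields $\abs{D-1}=\Ocal(\mathrm{err}\hk{2}[x,\omega(a_n)]+c_{x,n})$ on $\mathcal G$. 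On $\mathcal G^c$ I would use Cauchy--Schwarz together with the a priori bound: $\E\big[(D+1)\1_{\mathcal G^c}\big]\le\sqrt{\E[D^2]}\sqrt{\P(\mathcal G^c)}+\P(\mathcal G^c)=\Ocal\big(\sqrt{\P(\mathcal G^c)}\big)$, and $\P(\mathcal G^c)=\Ocal\big(nG(\omega(a_n))+\exp(-c\,\omega(a_n)^2/n)\big)$ by Fuk--Nagaev (a big jump among the first $n-1$ copies, or an atypically large truncated sum). Collecting the pieces, $\norm{T[\xi_{x,n}]-\xi^*_{x,n}}=\Ocal\big(\mathrm{err}\hk{2}[x,\omega(a_n)]+c_{x,n}+\sqrt{\P(\mathcal G^c)}+nG(x)\big)$, so squaring and using $t^2\le t$ on $[0,1]$ replaces $\sqrt{\P(\mathcal G^c)}$ by $\P(\mathcal G^c)$. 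Finally I would optimise the free sequence $\omega(a_n)$: balancing $\mathrm{err}\hk{2}[x,\omega(a_n)]\asymp\omega(a_n)/x$ against $nG(\omega(a_n))$ yields the scale $\omega(a_n)\asymp a_n(x/a_n)^{1/(1+\alpha)}$, for which both are $\asymp(a_n/x)^{\alpha_1}$ and the Gaussian remainder is negligible, giving a bound of the advertised shape.

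The main obstacle is the a priori bound $\E[D^2]=\Ocal(1)$: without it the Cauchy--Schwarz step on $\mathcal G^c$ only delivers $\Ocal(\sqrt n)$, which is vacuous. It is precisely here that the indicator $\1\gk{x-H\ge\max_{i<n}\tilde X_i}$ — equivalently, the geometry of the fibres of $T$ — does the work, by pruning the intermediate-jump configurations that would otherwise dominate the $L^2$ norm; making this quantitative requires a careful Fuk--Nagaev decomposition of $S_{n-1}$ into its jump and bulk parts. A secondary technical point is to check that the \emph{p.m.f.} ratio $\P(X_1=x-H)/\P(X_1=x)$ is genuinely controlled by $\mathrm{err}\hk{2}$ uniformly over $\abs H\le\omega(a_n)$ (cf.\ the Remark after Theorem~\ref{ThmNonLocal}, where the one-step p.m.f.\ ratio can misbehave but that is harmless here since $\omega(a_n)\to\infty$), and the routine optimisation of $\omega(a_n)$ together with the verification that the exponential remainder in $\P(\mathcal G^c)$ is of lower order.
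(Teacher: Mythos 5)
Your route is genuinely different from the paper's. The paper proves Theorem~\ref{Thm3} by the same information-theoretic scheme as Theorem~\ref{Thm2}: Pinsker's inequality plus Csisz\'ar's parallelogram identity, applied to the relative entropies of $\xi_{x,n}$ and $\xi^*_{x,n}$ with respect to $\mu^{\otimes n}$, using the explicit Radon--Nikodym derivatives $\1\{S_n(y)=x\}/\P(S_n=x)$ and $\1\{S_n(y)=x\}/\P\rk{X_n=x-S_{n-1}(y)}$ together with the local estimates of Theorem~\ref{Thm1}; bad configurations then enter only through their probabilities multiplied by logarithms of these densities. You instead compute the density $D$ of the symmetrised conditional law with respect to $\xi^*_{x,n}$ directly, estimate $\E[D^2]=\Ocal(1)$, and split $\E\abs{D-1}$ over a good event. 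The fibre analysis of $T$, the tie correction, and the observation that the indicator $\1\{x-H\ge\max_{i<n}\tilde X_i\}$ prunes intermediate jumps (which is what makes $\E[D^2]=\Ocal(1)$ true) are correct and are a nice concrete substitute for the paper's entropy bookkeeping. The shared imprecision about controlling the p.m.f.\ ratio by $\mathrm{err}\hk{2}$ (a tail hypothesis) is present in the paper as well, so I do not count it against you.

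There is, however, a genuine quantitative gap in your treatment of $\mathcal G^c$. Cauchy--Schwarz gives $\E[(D+1)\1_{\mathcal G^c}]=\Ocal\big(\sqrt{\P(\mathcal G^c)}\big)$ with $\P(\mathcal G^c)\asymp nG(\omega(a_n))$, so after squaring your bound contains the extra term $nG(\omega(a_n))$, which does not appear in the statement and is not dominated by $\max\{\mathrm{err}\hk{2}[x,\omega(a_n)],c_{x,n},nG(x)\}$: for $\alpha\in(0,2)$ one has $nG(a_n)\asymp 1$, so for admissible choices $\omega(a_n)$ just above $a_n$ your bound is vacuous, and your final optimisation of $\omega(a_n)$ repairs this only under the unstated assumption $\mathrm{err}\hk{2}[x,y]\gtrsim y/x$ and only for that re-chosen, larger $\omega(a_n)$. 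The step can be fixed without Cauchy--Schwarz: on the event $\omega(a_n)<\abs{H}\lesssim x/2$ the one-jump structure (enforced by the indicator in $D$ and Fuk--Nagaev for the remaining bulk) gives $\abs{D-1}=\Ocal\rk{\min\{1,\abs{H}/x\}+c_{x,n}}$, so the bad-region contribution is $\Ocal\big(x^{-1}\E[\abs{H}\1\{\abs{H}>\omega(a_n)\}]+nG(x)\big)=\Ocal\big(nG(\omega(a_n))\,\omega(a_n)/x+nG(x)\big)$, which is of the stated order; this weighted estimate (rather than the crude probability of $\mathcal G^c$) is also, implicitly, how the paper's entropy computation survives this region, since there the density ratio enters only through its logarithm, which is $\Ocal(\abs{H}/x)$ on that range.
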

\section{An application}
In this section, we show how we can use the results above to gain some new insights. Suppose $\left(N_x\right)_{x\in\Z}$ is a collection of independent Poisson random variables with intensity $\l>0$. Consider the random sum
\begin{equation}
    S_n=\sum_{x=-n}^{n}\sum_{i=1}^{N_x}Y_i\hk{x}\, ,
\end{equation}
where $\{Y_i\hk{x}\}_{x\in\Z,i\in\N}$ is a collection of independent symmetric \textit{zeta}($1+\alpha$) distributed random variables, independent of $\left(N_x\right)_{x\in\Z}$. 
\begin{proposition}
Given $\alpha>1$, for any $c>0$, uniformly in $k\ge cn$
\begin{equation}
    \P(S_n=k)=\P\left(\exists x\in \{-n,\ldots, n\}\text{ and }i\in\{1,\ldots,N_x\}\colon Y_i\hk{x}=k \right)\left(1+\Ocal\left(n^{-\beta}\right)\right)\, ,
\end{equation}
for some $\beta=\beta_\alpha>0$. This can be interpreted as a condensation phenomena, see \cite{grosskinsky2003condensation}. The constant $\beta$ is the same as in Example \ref{Ex1}.
\end{proposition}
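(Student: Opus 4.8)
We view $S_n$ as a sum of a \emph{random} number of i.i.d.\ summands and invoke Example~\ref{Ex1} after conditioning on that number. Write $Z_1,Z_2,\dots$ for i.i.d.\ symmetric zeta$(1+\alpha)$ variables and $M:=\sum_{x=-n}^{n}N_x$; then $M\sim\Poi(\bar m)$ with $\bar m:=(2n+1)\lambda$, $M$ is independent of $(Z_j)_j$, and $S_n$ has the law of $\sum_{j=1}^{M}Z_j$. Put $p_k:=\P(Z_1=k)$. For the right-hand side of the Proposition, Poisson thinning shows that the number of pairs $(x,i)$ with $Y_i^{(x)}=k$ is $\Poi(\bar m p_k)$, so this right-hand side equals $1-e^{-\bar m p_k}$; since $k\ge cn$ forces $\bar m p_k=\Ocal(n^{-\alpha})\to0$, Taylor expansion gives
\[
\P\big(\exists\,x,i:\ Y_i^{(x)}=k\big)=\bar m\,p_k\,\big(1+\Ocal(n^{-\alpha})\big),\qquad\text{uniformly in }k\ge cn.
\]
As $n^{-\alpha}=\Ocal(n^{-\beta})$ for the rate $\beta$ of Example~\ref{Ex1}, it suffices to prove $\P(S_n=k)=\bar m\,p_k\,(1+\Ocal(n^{-\beta}))$, uniformly in $k\ge cn$.

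Write $q_m:=\P\big(\sum_{j=1}^m Z_j=k\big)$, so $\P(S_n=k)=\sum_{m\ge0}\P(M=m)\,q_m$. Since $\alpha>1$, the $Z_j$ are centred, so the centring $b_m$ of Theorem~\ref{Thm1} vanishes and the theorem applies verbatim to $\sum_{j=1}^m Z_j$. On the window $W:=\{m:\ |m-\bar m|\le C_0\sqrt{n\log n}\}$ one has $m\asymp n$, hence $k\ge cn\ge c'm$ for a suitable constant $c'=c'(c,\lambda)>0$, which places us exactly in the regime of Example~\ref{Ex1}; therefore $q_m=m\,p_k\,(1+\Ocal(n^{-\beta}))$, uniformly in $k\ge cn$ and $m\in W$. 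Summing, and using the Poisson concentration bound $\E[M\,\1\{M\in W\}]=\bar m\,(1+\Ocal(n^{-D}))$ for $D$ as large as we wish (take $C_0$ large), we obtain $\sum_{m\in W}\P(M=m)\,q_m=\bar m\,p_k\,(1+\Ocal(n^{-\beta}))$.

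It remains to control $\sum_{m\notin W}\P(M=m)\,q_m$, and this is the delicate step. The statement is uniform over \emph{all} $k\ge cn$, so $p_k$ may be arbitrarily small and the trivial bound $q_m\le1$ is useless; instead one uses the single-big-jump upper bound $q_m\le C\,m\,p_k$, valid uniformly in $m$ whenever $k\ge c'm$ (for $m$ beyond a fixed threshold this is again Example~\ref{Ex1}; for the finitely many smaller $m$ it is the classical local estimate for a fixed number of heavy-tailed summands in the spirit of Tchachkuk--Nagaev, whose Poisson weights $\P(M=m)$ are moreover super-exponentially small). On $\{m\notin W:\ k\ge c'm\}$ this yields a contribution $\le C\,p_k\,\E[M\,\1\{M\notin W\}]=\Ocal(n^{1-D/2}p_k)$, which is negligible for $C_0$ large. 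On $\{m:\ k<c'm\}$ one has $m>k/c'$, and $k/c'$ exceeds any fixed multiple of $\bar m$ (as $k\ge cn$), so the Poisson upper-tail/Chernoff bound gives $\P(M>k/c')\le e^{-\delta k}$ for some $\delta=\delta(c,\lambda)>0$; since $k\to\infty$, the polynomial $p_k\asymp k^{-(1+\alpha)}$ beats $e^{-\delta k}$, so this contribution is also negligible, uniformly in $k$. Adding the three ranges gives $\P(S_n=k)=\bar m\,p_k\,(1+\Ocal(n^{-\beta}))$, and comparison with the first paragraph completes the proof.

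The main obstacle is exactly this last paragraph: because the Poisson fluctuations of $M$ around $\bar m$ are only polynomially small at the edge of $W$, one cannot discard the event $\{M\notin W\}$ cheaply when $k$ (hence $p_k$) is very large, and one must propagate the single-big-jump upper bound $q_m\le Cmp_k$ through every $m$ with $k\ge c'm$, handling separately the range $m>k/c'$ where the walk $\sum_j Z_j$ has left the large-deviation regime but $M$ is already in its genuinely exponential tail.
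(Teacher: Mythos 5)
Your proof is correct and takes essentially the same route as the paper: condition on the Poisson total count $M$, apply Theorem \ref{Thm1}/Example \ref{Ex1} on a concentration window for $M$, and use Poisson thinning to evaluate the existence probability as $1-\ex^{-\bar m p_k}$. In fact your treatment of the off-window contribution (the uniform single-big-jump bound $q_m\le Cmp_k$ for $k\ge c'm$, plus the Poisson Chernoff bound for $m>k/c'$ compared against $p_k$) is more careful than the paper's sketch, which discards the event $\{M\notin W\}$ as stretch-exponentially unlikely without comparing that probability to $n p_k$, a point that matters for uniformity when $k$ is super-polynomially large in $n$.
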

In \cite{kluppelberg1997large} the asymptotics of the cumulative distribution function $\P(S_n>k)$ were obtained, however neither the error term was quantified nor the probability density function approximated.
\begin{proof}
The idea is that the parameter $n$ in Theorem \ref{Thm1} is now Poisson distributed with parameter $(2n+1)\l$. However, by standard large deviation estimates for Poisson random variables, one can show that such a Poisson random variable is bounded by $(2n+1)\l\pm n^{1/2+\e}$ for any $\e>0$, outside a set of stretch exponentially small probability. Hence, we can apply Theorem \ref{Thm1}.

Note that for every $\e>0$ there exists a $\delta>0$ such that 
\begin{equation}
    \P\left(\abs{\sum_{x=-n}^{n}N_x-(2n+1)\l}\ge n^{1/2+\e}\right)=\Ocal\left(\ex^{-n^\delta}\right)\, ,
\end{equation}
see \cite[Eq. (2.2.12)]{dembo2009large}.

Conditional on the value of $\sum_{x=-n}^{n}N_x$ and on the event $\abs{\sum_{x=-n}^{n}N_x-(2n+1)\l}\le n^{1/2+\e}$, we can apply Theorem \ref{Thm1} to get
\begin{equation}
    \P\left(\sum_{x=-n}^{n}\sum_{i=1}^{N_x}Y_i\hk{x}=k\Big| \sum_{x=-n}^{n}N_x\right)=\P(Y_1\hk{0}=k)\sum_{x=-n}^{n}N_x\left(1+\Ocal\left(n^{-\beta}\right)\right)\, ,
\end{equation}
for some $\beta>0$. Furthermore, note that on the event $\gk{\abs{\sum_{x=-n}^{n}N_x-(2n+1)\l}\le n^{1/2+\e}}$ and for $M=\sum_{x=-n}^{n}N_x$
\begin{multline}
    \P\left(\exists (x,i)\in \{-n,\ldots, n\}\times\{1,\ldots,N_x\}\colon Y_i\hk{x}=k\Big|M\right)\\
    \sim M\P\left(Y_1\hk{0}=k\right)\rk{1+\Ocal\rk{n\P\left(Y_1\hk{0}=k\right)}}\, ,
\end{multline}
by the fundamental property of Poisson processes.
\end{proof}
\section{Proofs}
\subsection{Technical preliminaries}
Before embarking on the proof, we recall the scales involved in our analysis:
\begin{itemize}
    \item The scale $n$, given.
    \item The scale $a_n$, induced by the CLT scaling. It satisfies $L(a_n)a_n^{-\alpha}\sim n^{-1}$ if $\alpha\in(0,2)$ and $\sigma^2(a_n)a_n^{-2}\sim n^{-1}$ if $\alpha\ge 2$, see Equation \eqref{Equation725}.
    \item The scale of $x$. It only has to obey the constraint that $x=\omega(a_n)$.
    \item The induced scale $\frac{x}{a_n}$. It relates to the best possible error we can achieve.
\end{itemize}
Recall Potter's bound (see \cite[Theorem 1.5.6]{bingham1989regular}) which gives for $L$ slowly varying and any $\delta>0$, that there exists $c_\delta$ such that for $a,b$ sufficiently large
\begin{equation}\label{eqrefpot}
    L(a)/L(b)\le c_\delta \max\{(a/b)^\delta,(b/a)^\delta\}\, .
\end{equation}
\begin{remark}[The Gaussian domain of attraction]\label{RemarkGaussianCase}
    For $\alpha\ge 2$, the limiting law of $(S_n-b_n)/a_n$ is Gaussian. This changes the big jump phenomenon of $\P\rk{S_n=x}$ in the region where $a_n\le x\le Ca_n\log(n)$, for $C>0$. This was already observed by Nagaev \cite{nagaev1982asymptotic} in the case $\alpha>2$ and $\gk{S_n\ge x}$, see the recent \cite{berger2023collective} for the complete picture. We summarize the points relevant to our case: if $\alpha>2$, the have that 
    \begin{equation}
        \P\rk{S_n-b_n>x}\sim pnL(x)x^{-\alpha}\qquad\text{if}\qquad x>b\rk{n\log(n)}^{1/2}\, ,
    \end{equation}
    where $b>(\alpha-2)^{1/2}$. If $b<(\alpha-2)^{1/2}$, this is no longer true (for the case $b=(\alpha-2)^{1/2}$, see \cite{berger2023collective}). 
    
    If $\alpha=2$, we need to be more careful: set $q(x)=x^2\P(X_1>x)/\sigma^2(x)$. For $\alpha=2$, we have that $\sigma^2(x)$ is slowly varying and grows faster than $L(x)$ (see \cite[Proposition 1.5.9a]{bingham1989regular}) and hence $q(x)=o(1)$, as $x\to\infty$, and slowly varying. Then, using \cite[Equation 2.9]{berger2023collective}, we have the occurrence of the single big jump if
    \begin{equation}
        \liminf_{n\to \infty}\frac{\rk{x/(a_n)}^2}{2\log q(a_n)}>1\, ,
    \end{equation}
    and no big jumps if the limsup is bounded from above by 1. By the Potter bounds for any $\e>0$, $q(x)=\Ocal\rk{x^{-\e}}$ as $x\to\infty$ and hence if $(x/a_n)^2$ grows faster than $\sqrt{\log(n)}$, a big jump will occur.
\end{remark}

We recall the \textit{local Fuk--Nagaev inequality} from \cite[Theorem 5.1]{berger2019notes}.
\begin{theorem}
Fix $\alpha>0$. Set $M_n$ the maximum of the $X_1,\ldots,X_n$. Write $\Sh_n$ for the recentered walk $\Sh_n=S_n-\floor{b_n}$. Write $\sigma_{2,\alpha}(x)=\E\ek{\abs{X_1}^{\alpha}\1\gk{\abs{X_1}>x}}$ if the tails decay with speed $\alpha>2$. Again, $\sigma_{2,\alpha}(x)$ is slowly varying, see \cite[Proposition 1.5.9a]{bingham1989regular}. Under the conditions from Theorem \ref{Thm1}, there exist $\c_1,\c_2,\c_3>0$ such that for every $1\le y\le x$ and every $x$ with $x\ge a_n$, we have
\begin{equation}\label{EqFukNag}
    \P\left(\Sh_n=x,\,M_n\le y\right)\le\frac{\c_3}{a_n}\begin{cases}
    \ex^{-\c_1x^2/n} +\left(\frac{xy^{\alpha-1}}{n\sigma_{2,\alpha}(y)}\right)^{-\c_2x/y}&\textnormal{ if }\alpha> 2\, ,\\
    \ex^{x/y}\rk{1+\frac{xy}{n\sigma_2(y)}}^{-x/y}&\textnormal{ if }\alpha=2\, ,\\
    \left(\c_1\frac{y}{x}nL(y)y^{-\alpha}\right)^{x/2y}&\textnormal{ if }\alpha\in (1,2)\, ,\\
    \ex^{\frac{3x}{y}}\left(1+\frac{\c_1x}{nL(y)}\right)^{-\frac{x}{4y}}+\ex^{-\c_2(x/a_n)^2}&\textnormal{ if }\alpha=1\, ,\\
    \ex^{x/4y}\left(1+\frac{\c_1 x}{ny^{1-\alpha}L(y)}\right)^{-x/y}&\textnormal{ if }\alpha<1\, .
    \end{cases}
\end{equation}
\end{theorem}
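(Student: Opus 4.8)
~\textbf{Plan of proof.}
The statement to be proven is the local Fuk--Nagaev inequality quoted from \cite[Theorem 5.1]{berger2019notes}, so strictly speaking it is imported; but let me sketch how I would prove it from scratch. The overall strategy is a truncation-plus-exponential-Chebyshev argument adapted to the \emph{local} (point probability, not tail) setting, with the extra twist that we are restricting to the event $\{M_n\le y\}$, i.e.\ all summands are capped at $y$. First I would introduce the truncated variables $\bar X_i = X_i\1\{|X_i|\le y\}$ (or $X_i\1\{X_i\le y\}$ in the one-sided cases) and write $\bar S_n=\sum_i \bar X_i$; on the event $\{M_n\le y\}$ we have $S_n=\bar S_n$ provided we are also careful about the lower tail, which is controlled by the hypothesis $\P(X_1<-x)\le \Ocal(1)L(x)x^{-\tilde\alpha}$ with $\tilde\alpha\ge\alpha$. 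So $\P(\Sh_n=x,\,M_n\le y)\le \P(\bar S_n - \floor{b_n}=x)$ up to a negligible correction.

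The key step is then to bound the \emph{point} probability $\P(\bar S_n=m)$ for $m$ near $x+\floor{b_n}$. Here I would use the standard device of passing from a point mass to a local average: $\P(\bar S_n=m)\le C a_n^{-1}\sup_{|u|\le a_n}\P(\bar S_n\in[m+u,m+u+a_n])$ plus a smoothing/Fourier argument, or more directly invoke a local CLT-type bound for the truncated sum — this is where the prefactor $\c_3/a_n$ comes from, since $a_n$ is precisely the CLT fluctuation scale. For the remaining tail factor one applies the exponential Markov inequality to $\bar S_n$: for $t>0$,
\begin{equation}
\P(\bar S_n\ge x')\le \ex^{-t x'}\E[\ex^{t\bar X_1}]^n\le \exp\!\big(-tx' + n(\E[\ex^{t\bar X_1}]-1)\big),
\end{equation}
and one estimates $\E[\ex^{t\bar X_1}]-1$ by splitting into the centered-second-moment part (giving the Gaussian term $\ex^{-\c_1 x^2/n}$ via $t\asymp x/n$ when $x\lesssim n/y$) and the large-deviation part where a single truncated jump of size up to $y$ dominates. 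Optimizing $t$ — typically $t\asymp (1/y)\log(\text{something})$ in the heavy regime $x\gtrsim n/y$ — produces the factors $(xy^{\alpha-1}/(n\sigma_{2,\alpha}(y)))^{-\c_2 x/y}$ and the analogous expressions in the other ranges of $\alpha$, with the $\ex^{cx/y}$ prefactors arising from crude bounds on $\E[\ex^{t\bar X_1}\1\{X_1\le 0\}]$ and from $\ex^{-tb_n}$-type corrections. The case distinctions on $\alpha$ reflect which moments of $\bar X_1$ are finite without truncation: for $\alpha>2$ the variance is genuinely there (hence $\sigma_{2,\alpha}$), for $\alpha=2$ one gets the slowly varying $\sigma_2(y)$, for $\alpha\in(1,2)$ only the mean survives so the truncated second moment is $\asymp L(y)y^{2-\alpha}$, and for $\alpha\le 1$ even recentering is delicate, forcing the extra additive Gaussian-type safety term $\ex^{-\c_2(x/a_n)^2}$.

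The main obstacle I expect is twofold. First, making the passage from $\P(\bar S_n = m)$ (a point probability) to something controllable uniformly in $m$ over a window of width $\omega(a_n)$ — this requires either a quantitative local CLT for truncated heavy-tailed sums or a Fourier-analytic smoothing, and getting the clean $a_n^{-1}$ prefactor with constants independent of $x,y$ is the technical heart. Second, the optimization of the Laplace-transform bound has to be carried out carefully in each $\alpha$-regime so that the truncation level $y$ enters exactly as written; in particular for $\alpha=1$ one must simultaneously control the recentering constant $b_n=n\E[X_1\1\{|X_1|\le a_n\}]$ against the truncation at $y$, which is why that case alone carries both a product term and an additive exponential term. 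Since the result is cited verbatim from \cite{berger2019notes}, in the paper itself one would simply refer to that reference; the sketch above indicates the route one would reconstruct if needed.
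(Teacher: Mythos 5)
Your plan treats the statement as imported wholesale from \cite[Theorem 5.1]{berger2019notes}, but the paper's own proof is more careful on exactly this point: that reference only covers $\alpha\in(0,2)$, while the case $\alpha>2$ is deduced from \cite[Corollary 1.7]{MR0542129} and the case $\alpha=2$ from \cite[Lemma 5.2]{berger2023collective}. So ``simply refer to that reference'' would leave the regimes $\alpha\ge 2$ --- which Theorem \ref{Thm1} does use, and which carry the $\sigma_{2,\alpha}(y)$ and $\sigma_2(y)$ variants of the bound --- unjustified.

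Concerning your from-scratch sketch, the genuine gap is the mechanism that produces the product structure of \eqref{EqFukNag}: a local factor $\c_3/a_n$ \emph{times} an exponentially small Fuk--Nagaev factor. Truncation at $y$ plus the exponential Markov inequality yields an integral (tail) bound for the truncated sum, and a local CLT or smoothing argument yields $\sup_m\P(\bar S_n=m)\le C/a_n$; but these two estimates cannot simply be multiplied, and your sketch does not say how they are to be combined. The standard device, which underlies the cited proofs, is to split the index set into two halves: on $\{\Sh_n=x,\,M_n\le y\}$ at least one half-sum must exceed $x/2$, so one bounds that half by the integral Fuk--Nagaev inequality (with the increments still capped at $y$) and the other half by the uniform local bound $\sup_z\P(\,\cdot\,=z)\le C/a_n$ at the CLT scale $a_n$. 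This splitting is also why the exponents in \eqref{EqFukNag} appear with the degraded factors $x/2y$ and $x/4y$ rather than $x/y$. Without this (or an equivalent conditioning step) the sketch does not deliver the stated local bound; with it, your truncation-plus-Chebyshev optimization in each $\alpha$-regime is indeed the right remaining calculation.
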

\begin{proof}
    The above result is stated in \cite[Theorem 5.1]{berger2019notes} for the case $\alpha\in(0,2)$. For $\alpha> 2$, it follows from \cite[Corollary 1.7]{MR0542129}. For $\alpha=2$, it follows from \cite[Lemma 5.2]{berger2023collective}.
\end{proof}

To ease reading, we write for $a,b\in \R$ and any $f\colon\Z\to\R$
\begin{equation}
    \sum_{k=a}^bf(k):=\sum_{k\in [a,b]\cap\Z}f(k)\, ,\qquad\textnormal{and similarly for}\qquad \sum_{k\ge a}f(k)\quad\text{and}\quad\sum_{k\le a}f(k)\, .
\end{equation}
\subsection{Proof of Theorem \ref{Thm1}}
We now begin with the main proof: without loss of generality, assume that $x>0$. Fix a sequence $\e_n=o(1)$ large enough such that $\e_nx/a_n\to+\infty$ (for $\alpha\in(0,2)$) and $\e_nx/(a_n\log^{1/2}(n))\to+\infty$ for $\alpha\ge 2$. The sequence $\e_n$ allows us to interpolate between the CLT scale $(a_n)_n$ and speed of divergence of $x$. We also fix the sequence $\et_n=\log(x/a_n\sqrt{\log(n)})^{-2}=o(1)$. Note that $\et_n x\sim x/\log^2(x)\to\infty$.

In the first part of the proof, we give general error bounds, valid for (almost) all kind of remainders $\mathrm{err}[x,y]$. In the second part of the proof, we collect all the errors and simplify. This allows to adapt the result easily to all type of error estimates without overloading the notation. We expand
\begin{equation}
    \begin{split}
        \P(\Sh_n=x)=&\P\left(\Sh_n=x, M_n\ge (1-\e_n)x\right)\\
        &+\P\left(\Sh_n=x, M_n\in (\et_nx,[1-\e_n]x)\right)+\P\left(\Sh_n=x, M_n\le \et_n x\right)\\
        =&\mathrm{A}+\mathrm{B}+\mathrm{C}\, .
    \end{split}
\end{equation}
\textbf{1. Estimating $\mathrm{A}$}: we begin by dissecting $\mathrm{A}$
\begin{equation}
  \begin{split}
      A= \P\left(\Sh_n=x,(1-\e_n)x\le M_n\le (1+\e_n)x\right)+\P\left(\Sh_n=x, M_n>(1+\e_n)x\right)=\mathrm{A1+A2}\, .
  \end{split}  
\end{equation}
The second term is negligible, as we will see later. For the first term, we write
\begin{equation}
   \begin{split}
        \mathrm{A1}=\P\left(\Sh_n=x,(1-\e_n)x\le M_n\le (1+\e_n)x\right)&=\sum_{y= (1-\e_n)x}^{(1+\e_n)x}\P\left(\Sh_n=x, M_n=y\right)\, .
   \end{split}
\end{equation}
We begin with an upper bound
\begin{equation}
    \sum_{y= (1-\e_n)x}^{(1+\e_n)x}\P\left(\Sh_n=x, M_n=y\right)\le \sum_{y= (1-\e_n)x}^{(1+\e_n)x}n\P\left(X_1=y\right)\P\left(S_{n-1}-\floor{b_n}=x-y\right)\, ,
\end{equation}
where we used the independence and a union bound.

Fix $y\in x[1-\e_n,1+\e_n]$ and write $w=y-x$. We then have that
\begin{equation}
    \P\left(X_1=y\right)=p\alpha L(y)y^{-(1+\alpha)}=p\alpha L(x+w)(x+w)^{-(1+\alpha)}\, .
\end{equation}
Using the error bounds we have for $L$ and the binomial series, we get
\begin{equation}
    \P\left(X_1=y\right)=p\alpha L(x)x^{-(1+\alpha)}\left(1+\mathrm{err}[x,\e_nx]+\Ocal(\e_n)\right)\, .
\end{equation}
Therefore,
\begin{equation}
     \P\left(\Sh_n=x,(1-\e_n)x\le M_n\le (1+\e_n)x\right)\le p\alpha n L(x)x^{-(1+\alpha)}\left(1+\mathrm{err}[\e_n ,x]+\Ocal(\e_n)\right)\, .
\end{equation}
On the other hand, we have
\begin{equation}\label{EquationOmitted}
\begin{split}
    \sum_{y= (1-\e_n)x}^{(1+\e_n)x}\P\left(\Sh_n=x, M_n=y\right)\ge &\sum_{y= (1-\e_n)x}^{(1+\e_n)x}n\P\left(X_1=y\right)\P\left(S_{n-1}-\floor{b_n}=x-y\right)\\
    &-\frac{1}{2}\sum_{y= (1-\e_n)x}^{(1+\e_n)x}n(n-1)\P\left(X_1=y\right)^2\P\left(S_{n-2}-\floor{b_n}=x-y\right)\, .
\end{split}
\end{equation}
As above, the first sum is $p\alpha n L(x)x^{-(1+\alpha)}\left(1+\mathrm{err}[\e_n ,x]+\Ocal(\e_n)\right)$. The second sum is bounded by
\begin{equation}
    Cn^2 L^2(x)x^{-2(1+\alpha)}\, ,
\end{equation}
for some $C>0$ and is negligible as we will see later.

For the second term $\mathrm{A2}$, we have
\begin{equation}\label{Equation1today}
    \begin{split}
       \mathrm{A2}=\P\left(\Sh_n=x, M_n>(1+\e_n)x\right)=&\sum_{y\ge (1+\e_n)x}\P\left(\Sh_n=x, M_n=y\right)\\
         &\le \sum_{y\ge (1+\e_n)x}n\P\left(X_1=y\right)\P\left(S_{n-1}-\floor{b_n}=x-y\right)\, ,
    \end{split}
\end{equation}
where we again used the exchangeability of the $X_i$'s and a union bound. We can estimate the first term by its maximum to conclude
\begin{equation}\label{Equation2today}
      \P\left(\Sh_n=x, M_n>(1+\e_n)x\right)\le n\sup_{y\ge (1+\e_n)x}\P\left(X_1=y\right)\P\left(S_{n-1}-\floor{b_n}\le -\e_n x\right)\, .
\end{equation}
Recall the condition on $\e_n x$ stated at the beginning of the proof and that the left tails of $X_1$ decay with speed at least $\Ocal\rk{L(x)x^{-\alpha}}$. We have that for some $C>0$
\begin{equation}\label{Equation711231}
    \P\left(S_{n-1}-\floor{b_n}\le -\e_n x\right)\le C n L(\e_nx)(\e_nx)^{-\alpha}\, ,
\end{equation}
using \cite[Theorem 2.1]{berger2019notes} in the case $\alpha\in(0,2)$, \cite[Theorem 2]{doney2001local} for $\alpha>2$ and \cite[Equation 2.33]{berger2023collective} for the case $\alpha=2$. Hence
\begin{equation}
    \P\left(\Sh_n=x, M_n>(1+\e_n)x\right)=\Ocal\left(x^{-(1+\alpha)}n^2 L(x)L(\e_nx)(\e_nx)^{-\alpha}\right)\, .
\end{equation}
To summarize: we have that
\begin{equation}
    \mathrm{A}= p\alpha n L(x)x^{-(1+\alpha)}\left(1+\mathrm{err}[x,\e_n x]+\Ocal(\e_n)+nL(\e_nx)(\e_nx)^{-\alpha}\right)\, .
\end{equation}
\textbf{2. Estimating $\mathrm{B}$}: to bound the term $\mathrm{B}$, we expand, as in Equations  \eqref{Equation1today} and \eqref{Equation2today}, for some $C_1'>0$ universal 
\begin{equation}
\begin{split}
    \mathrm{B}&=\P\left(\Sh_n=x, M_n\in (\et_nx,[1-\e_n]x)\right)=\sum_{y={\et_nx}+1}^{{(1-\e_n)x}-1}\P\left(\Sh_n=x, M_n=y\right)\\
    &\le \sum_{y={\et_nx}+1}^{{(1-\e_n)x}-1}\P\left(X_1=y\right)\P\left(S_{n-1}-\floor{b_n}=x-y\right)\\
    &\le \rk{\sup_{y\in \ek{{\et_nx}+1,{(1-\e_n)x}-1}}\P\left(X_1=y\right)}\P\left(S_{n-1}-\floor{b_n}\ge \e_n x\right)\\
    &\le C_1' nL(x)x^{-(1+\alpha)}\et_n^{-(1+\alpha)}\frac{L(\et_n x)}{L(x)}\P\left(S_{n-1}-\floor{b_n}\ge \e_n x\right)\, .
    \end{split}
\end{equation}
We use the same reasoning as in Equation \eqref{Equation711231} to bound
\begin{equation}
    \P\left(S_{n-1}-\floor{b_n}\ge \e_n x\right)\le C_1'' n L(\e_nx)(\e_nx)^{-\alpha}\, ,
\end{equation}
for some universal $C_1''>0$. This implies that for some universal $C_1>0$
\begin{equation}
    \mathrm{B}\le C_1 x^{-(1+\alpha)}L(x)n \left(n\frac{L(\e_n x)L(\et_n x)}{L(x)}x^{-\alpha}(\et_n)^{-\alpha-1}(\e_n)^{-\alpha}\right)\, .
\end{equation}
\textbf{3. Estimating $\mathrm{C}$}: it remains to bound the term $\mathrm{C}$, which we split further for some $c_1>0$
\begin{equation}
    \P\left(\Sh_n=x, M_n\le \et_n x\right)=\P\left(\Sh_n=x, M_n\le c_1a_n\right)+\P\left(\Sh_n=x, M_n\in (c_1a_n,\et_n x]\right)\, .
\end{equation}
The first term can be estimated using Fuk--Nagaev alone: we have that using Equation \eqref{EqFukNag} for some other $C=C(c_1)>0$
\begin{equation}
    \P\left(\Sh_n=x, M_n\le ca_n\right)=\Ocal\left(\ex^{-C\left(x/a_n\right)}\right)\, .
\end{equation}
For the last remaining term, we combine the Fuk--Nagaev inequality with the tail-estimates for the random variables themselves. Note that $ (c_1a_n,\et_n x]$ is non-empty, as $\et_nx/a_n$ diverges, see beginning of this section. Abbreviate $J^-=\log_2(1/\et_n)$ and $J^+=\log_2(c_1x/a_n)-1$. We expand
\begin{equation}
   \begin{split}
       \P&\left(\Sh_n=x, M_n\in (c_1a_n,\e_n x]\right)=\sum_{j=J^-}^{J^+}\P\left(\Sh_n=x, M_n\in (2^{-(j+1)},2^{{-j}}]x\right)\\
       &\le\sum_{j=J^-}^{J^+}\left(n\sup_{y\in (2^{-(j+1)x},2^{-j}]}\P\left(X_1=y\right)\right)\P\left(\Sh_n=x, M_n\le 2^{-j}x\right)\, .
   \end{split} 
\end{equation}
Using the tail bounds, we have that
\begin{equation}
    \sup_{y\in (2^{-(j+1)x},2^{-j}]}\P\left(X_1=y\right)=\Ocal\left(nL\left(2^{-j}x\right)\left(2^{-j}x\right)^{-(1+\alpha)}\right)=\Ocal\left(nL(x)x^{-(1+\alpha)}\left(2^{j}\right)^{2+\alpha}\right)\, ,
\end{equation}
where we used Potter's bound to see that $L\left(2^{-j}x\right)=\Ocal\left(2^jL(x)\right)$. For the event $\{\Sh_n=x, M_n\le 2^{-j}x\}$, we use Fuk--Nagaev to get that for $\alpha\in(0,2)$
\begin{equation}
    \P\left(\Sh_n=x, M_n\le 2^{-j}x\right)=\Ocal\left(2^j\right)^{-2^{j-2}}\, ,
\end{equation}
see \cite[p. 25]{berger2019notes}. For $\alpha=2$, we first note that $x\mapsto x^{-2}\sigma^2(x)$ is eventually decreasing as $\sigma^2(x)$ is slowly varying. Hence, $2^{-j}x\sigma^2\rk{2^{-j}x}\le a_n^{-2}\sigma^2(a_n)\sim n^{-1}$. Thus, we can bound
\begin{equation}
    \P\left(\Sh_n=x, M_n\le 2^{-j}x\right)=\Ocal\rk{\ex^{2^j}2^{-j2^{j+1}}}=\Ocal\left(2^j\right)^{-2^{j-2}}\, .
\end{equation}
For $\alpha>2$, we get the same bound analogously.

Combining the above bounds gives us that for some universal $C=C(c_1)>0$ (only depending on $\alpha>0$)
\begin{equation}
    \begin{split}
        \P&\left(\Sh_n=x, M_n\in (c_1a_n,\et_n x]\right)=nL(x)x^{-(1+\alpha)}\sum_{j=J^-}^{J^+}\left(2^{j}\right)^{2+\alpha}\left(c2^j\right)^{-2^{j-2}}=nL(x)x^{-(1+\alpha)}\Ocal\left(\ex^{-C\et_n^{-1}}\right)\, .
    \end{split}
\end{equation}
\textbf{4. Collection of the error bounds:} the previous calculations can be summarized as follows:
\begin{equation}
    \P(S_n=x)=n\P(X_1=x)\left(1+\mathrm{E}\right)\, ,
\end{equation}
with
\begin{equation}\label{EqLargeErrorBound}
    \mathrm{E}=\Ocal\left(\mathrm{err}[x,\e_n x]+\e_n+nL(x)x^{-(1+\alpha)}+\ex^{-C\et_n^{-1}}+n\frac{L(\e_n x)L(\et_n x)}{L(x)}x^{-\alpha}(\e_n)^{-\alpha}\et_n^{-(1+\alpha)}\right)\, .
\end{equation}
The main challenge in this case is to balance the last term in Equation \eqref{EqLargeErrorBound} with term $\e_n$. We do a case distinction, depending on the value of $\alpha$.

\textbf{The case $\mathbf{\boldsymbol{\alpha\in} (0,2)}$}: recall that by Equation \eqref{Equation725}, we have
\begin{equation}
    nx^{-\alpha}\sim \left(\frac{x}{a_n}\right)^{-\alpha}\frac{1}{L(a_n)}\, .
\end{equation}
For $\alpha\in(0,2)$, we choose $\e_n =(x/a_n)^{-\alpha_1}$, for some $\alpha_1\in (0,1)$. This gives $x=a_n\delta_n$ with $\delta_n=(x/a_n)^{1-\alpha_1}$. Note that for any $\e_1,\e_2>0$ and $C>0$ depending of $\e_1,\e_2$
\begin{equation}\label{Equation8123}
    \begin{split}
       n\frac{L(\e_n x)L(\et_n x)}{L(x)}x^{-\alpha}(\e_n)^{-\alpha}\et_n^{-(1+\alpha)}&\le C \frac{L(\e_n x)L(\et_n x)}{L(x)L(a_n)}\left(\frac{x}{a_n}\right)^{-\alpha}(\e_n)^{-\alpha}\et_n^{-(1+\alpha)}\\
        &\le C \frac{L(\e_n x)L(\et_n x)}{L(x)L(a_n)}(\delta_n)^{-\alpha}\et_n^{-(1+\alpha)}\\
        &\le C(\delta_n)^{-\alpha+\e_1}\et_n^{-(1+\alpha)+\e_2}\, .
    \end{split}
\end{equation}
where we used the Potter bounds in Equation \eqref{eqrefpot} twice, once with $\delta=\e_1$ and once with $\delta=\e_2$:
\begin{equation}
    \frac{L(\e_n x)}{L(a_n)}\le \delta_n^{\e_1}\quad\text{and}\quad \frac{L(\et_n x)}{L(x)}\le \et_n^{-\e_2}\, .
\end{equation}
Recall that $\et_n=\log(x/a_n)^{-2}$.  We obtain (for some $\e_3,\e_3'$ which can be made arbitrarily small, as $\e_1,\e_2$ becomes small)
\begin{equation}
      n\frac{L(\e_n x)L(\et_n x)}{L(x)}x^{-\alpha}(\e_n)^{-\alpha}\et_n^{-(1+\alpha)}\le C (\delta_n)^{-\alpha+\e_3}\le C\left(\frac{x}{a_n}\right)^{-(1-\alpha_1)\alpha-\e_3'}\, .
\end{equation}
For $\alpha_1=\alpha/(\alpha+1)$, both terms are approximately equal and we hence obtain
\begin{equation}
    \e_n+\left(\frac{x}{a_n}\right)^{-(1-\zeta)\alpha-\e_3'}=\left(\frac{x}{a_n}\right)^{-\zeta}+\left(\frac{x}{a_n}\right)^{-(1-\zeta)\alpha-\e_3'}\le 2\left(\frac{x}{a_n}\right)^{-\frac{\alpha}{\alpha+1}+\e''}\, ,
\end{equation}
for some $\e''=o(1)$ as $\e_1,\e_2\downarrow 0$.

The previous equation reduces the error in Equation \eqref{eqrefpot} (as the other terms are negligible) to 
\begin{equation}
    \mathrm{E}=\Ocal\left(\mathrm{err}[x,a_n\delta_n]+2\left(\frac{a_n}{x}\right)^{\alpha_1-\e''}\right)\, ,
\end{equation}
with $\e''>0$ as small as we want. This concludes the proof of Theorem \ref{Thm1} for the case $\alpha\in(0,2)$.

\textbf{The case $\mathbf{\boldsymbol{\alpha\in} (2,\infty)}$}: recall $\alpha_1=\alpha/(1+\alpha)$. Choose the largest possible $\beta\ge 0$ such that
\begin{equation}
    n^{-\beta}\rk{\frac{x}{\sqrt{n\log(n)}}}^{1-\alpha_1}\to\infty\qquad\text{and}\qquad \beta\le \frac{(\alpha-2)(\alpha+1)}{2(2\alpha+1)}\, .
\end{equation}
Choose $\e_n=n^{-\beta}\rk{\frac{x}{\sqrt{n\log(n)}}}^{-\alpha_1}$. Note that this allows us to rewrite
\begin{equation}
    nx^{-\alpha}\e_n^{-\alpha}=n^{1-\alpha/2}\rk{\frac{x}{\sqrt{n\log(n)}}}^{-\alpha}\e_n^{-\alpha}\log^{-\alpha/2}(n)=n^{1-\alpha/2+\beta\alpha_1}\rk{\frac{x}{\sqrt{n\log(n)}}}^{-\alpha(1-\alpha_1)}\!\!\!\!\!\!\log^{-\alpha/2}(n)\, .
\end{equation}
Note that for the choices of $\alpha_1,\beta$, we have that $-\alpha_1=-\alpha(1-\alpha_1)$ and $1-\alpha/2+\beta\alpha_1\le -\beta$. Hence, we get that
\begin{equation}
\begin{split}
    \Ocal\rk{nx^{-\alpha}\e_n^{-\alpha}+\e_n}&=\Ocal\rk{n^{1-\alpha/2+\beta\alpha_1}\rk{\frac{x}{\sqrt{n\log(n)}}}^{-\alpha(1-\alpha_1)}\!\!\!\!\!\!\log^{-\alpha/2}(n)+n^{-\beta}\rk{\frac{x}{\sqrt{n\log(n)}}}^{-\alpha_1}}\\
    &=\Ocal\rk{n^{1-\alpha/2+\beta\alpha_1}\rk{\frac{x}{\sqrt{n\log(n)}}}^{-\alpha_1}}\, .
\end{split}
\end{equation}
As in the case $\alpha\in(0,2)$, the slowly varying functions add at most a power of $\e''>0$, where we can choose $\e''>0$ as small as we desire. Furthermore, $nx^{-(1+\alpha)}L(x)=o\rk{nx^{-\alpha}\e_n^{-\alpha}}$. This gives
\begin{equation}
    \mathrm{E}=\Ocal\rk{n^{1-\alpha/2+\beta\alpha_1}\rk{\frac{x}{\sqrt{n\log(n)}}}^{-\alpha_1+\e''}}
\end{equation}
The positive $\e''$ easily absorbs the decay of order $\log(n)$. Hence, we can conclude the proof as we did in the case $\alpha\in (0,2)$.

\textbf{The case $\mathbf{\boldsymbol{\alpha=} 2}$}: 
Choose $\e_n=\rk{\frac{x}{a_n\sqrt{\log(n)}}}^{-\alpha_1}$, for $\alpha_1=2/3$. We then have that
\begin{equation}
     nx^{-2}\e_n^{-2}=\rk{\frac{x}{a_n\sqrt{\log(n)}}}^{-\alpha_1}\rk{\frac{\sqrt{n}}{a_n\sqrt{\log(n)}}}^2\, .
\end{equation}
This gives that
\begin{equation}
    \Ocal\rk{nx^{-2}\e_n^{-2}+\e_n}=\Ocal\rk{\rk{\frac{x}{a_n\sqrt{\log(n)}}}^{-\alpha_1}}\, .
\end{equation}
From there on, we proceed as in the case $\alpha>2$, noting that $\beta=0$.
\qed

\begin{remark}
    We expect that error calculated above to be essentially optimal (up to the $\e>0$ which can be chosen as small as we want. Indeed, probabilistically, there are two sources of errors: the maximum can deviate from $x$ by $\e_nx$. This gives an error of $\Ocal\rk{\e_n}$. This error shrinks as we make $\e_n$ small. However, the remaining sum compensating by being larger/smaller than their CLT scale gives an error of $\Ocal\rk{(x/a_n)^{-\alpha}\e_n^{-\alpha}}$, which shrinks as we increase $\e_n$, see Equation \eqref{Equation8123}. Both error terms are optimal in the sense that we cannot replace $\Ocal(\ldots)$ by $o(\ldots)$, see \cite[Theorem 2.1]{berger2019notes}. Our choice of $\e_n$ makes the two errors asymptotically equal, selecting the minimal possible error. 
\end{remark}
\subsection{Proof of Theorem \ref{ThmNonLocal}}
The proof of Theorem \ref{ThmNonLocal} follows the same steps as the one of Theorem \ref{Thm1}: we first split the probability
\begin{equation}
      \P(\Sh_n\ge x)=\P\left(\Sh_n\ge x, M_n\ge (1-\e_n)x\right)+\P\left(\Sh_n\ge x, M_n< (1-\e_n)x\right)\, .
\end{equation}
The second term is negligible and produces the same errors as the terms $\mathrm{B}$ and $\mathrm{C}$ in the proof of Theorem \ref{Thm1}.

We upper bound the first term
\begin{equation}
    \P\left(\Sh_n\ge x, M_n\ge (1-\e_n)x\right)\le n\P\rk{X_1>(1-\e_n)x}=n\P\rk{X_1>x}\rk{1+\mathrm{err}[\e_n ,x]+\Ocal(\e_n)}\, .
\end{equation}
The lower bound is analogous to Equation \eqref{EquationOmitted} and is hence omitted. This concludes the proof of Theorem \ref{ThmNonLocal}.\qed

\subsection{Proof of Theorem \ref{Thm2} and Theorem \ref{Thm3}}
In this section we prove Theorem \ref{Thm2} and Theorem \ref{Thm3}. Theorem \ref{Thm2} will be proved in full detail while for Theorem \ref{Thm3} we just highlight the differences with Theorem \ref{Thm2}.

Set $x^-=x-\omega(a_n)$ for $\omega(a_n)>0$ fixed  . Recall that
\begin{equation}
    \nu_x=\P(X_1\in \,\cdot\,|X_1>x-\omega(a_n))\qquad\text{and}\qquad \nu_{x,n}=\P\left(\{X_i\}_{i=1,\ldots,n}\in\,\cdot\,|S_n>x\right)\, .
\end{equation}
Let
\begin{equation}
    \nu_{x,n}^*=\frac{1}{n}\sum_{j=1}^n\sigma^j\left(\mu^{\otimes (n-1)}\otimes \nu_x\right)\, ,
\end{equation}
where $\sigma^j$ switches the last coordinate with the $j$-th coordinate. We then have that using Pinsker’s inequality and Csisz\'{a}r’s parallelogram identity (see \cite{armendariz2011conditional})
\begin{equation}
    \norm{\nu_{x,n}-\nu_{x,n}^*}^2\le H(\nu_{x,n}|\mu^{\otimes n})+H(\nu_{x,n}^*|\mu^{\otimes n})-2H\left(\frac{\nu_{x,n}+\nu_{x,n}^*}{2}\Big|\mu^{\otimes n}\right)=\mathrm{A}+\mathrm{B}-\mathrm{C}\, ,
\end{equation}
where
\begin{equation}
    H(\mu|\nu)=\begin{cases}
    \int \frac{\d\mu}{\d\nu}\log  \left[\frac{\d\mu}{\d\nu}\right]\d \nu&\text{ if }\mu\ll\nu\, ,\\
    +\infty&\text{ otherwise.}
    \end{cases}
\end{equation}
Note that for $y\in \R^n$
\begin{equation}
    \frac{\d\nu_{x,n}}{\d\mu^{\otimes n}}(y)=\frac{\1\{S_n(y)>x\}}{G_n(x)}\, ,
\end{equation}
where $G_n(x)=\P(S_n>x)$ and $S_n(y)=\sum_{i=1}^ny_i$. Note that
\begin{equation}
    \frac{\d\nu_{x,n}^*}{\d\mu^{\otimes n}}(y)=\frac{1}{nG(x^-)}\sum_{i=1}^n\1\{y_i>x^-\}\, 
\end{equation}
where we recall $G(t)=\P(X_1>t)$.\\
We have that
\begin{equation}
    \mathrm{A}+\mathrm{B}=H(\nu_{x,n}|\mu^{\otimes n})+H(\nu_{x,n}^*|\mu^{\otimes n})=\int\log N_{x,n}\d\nu_{x,n}-\log\left({G}_n(x)n{G}(x^-)\right) \, ,
\end{equation}
where $N_{x,n}$ counts the number of coordinates larger than $x^-$. Note that
\begin{equation}
   \int\log N_{x,n}\d\nu_{x,n}=\sum_{k\ge 2}\log(k) \nu_{x,n}(N_{x,n}=k)=\sum_{k\ge 2}\log(k) \binom{n-1}{k-1}G(x^-)^{k-1}=\Ocal\left(nG(x)\right)\, .
\end{equation}
Thus
\begin{equation}\label{EquationAplusB}
    \mathrm{A}+\mathrm{B}=-\log\left({G}_n(x)n{G}(x^-)\right)+\Ocal\left(nG(x)\right)\, .
\end{equation}
On the other hand,
\begin{equation}
    \mathrm{C}=2H\left(\frac{\nu_{x,n}+\nu_{x,n}^*}{2}\Big|\mu^{\otimes n}\right)=\int \left[\frac{\d\nu_{x,n}}{\d\mu^{\otimes n}}+\frac{\d\nu_{x,n}^*}{\d\mu^{\otimes n}}\right]\log\left[\frac{\d\nu_{x,n}}{2\d\mu^{\otimes n}}+\frac{\d\nu_{x,n}^*}{2\d\mu^{\otimes n}}\right]\d\mu^{\otimes n}\, .
\end{equation}
We split the integrand into two: for the first part, we estimate
\begin{equation}
\begin{split}
    \int&\frac{\nu_{x,n}^*}{\d\mu^{\otimes n}} \log\left[\frac{\d\nu_{x,n}}{2\d\mu^{\otimes n}}+\frac{\d\nu_{x,n}^*}{2\d\mu^{\otimes n}}\right]\d\mu^{\otimes n}=\int \frac{N_{x,n}}{n{G}(x^-)}\log\left[\frac{\d\nu_{x,n}^*}{2\d\mu^{\otimes n}}+\frac{1}{2{G}_n(x)}\right]\d\mu^{\otimes n}\\
    =&\int \frac{\1_{N_{x,n}=1}}{n{G}(x^-)}\log\left[ \frac{1}{2 n{G}(x)}+\frac{1}{2{G}_n(x)}\right]\d\mu^{\otimes n}+\int \frac{N_{x,n}\1_{N_{x,n}>1}}{n{G}(x^-)}\log\left[\frac{\d\nu_{x,n}^*}{2\d\mu^{\otimes n}}+\frac{1}{2{G}_n(x)}\right]\d\mu^{\otimes n}\, .
\end{split}
\end{equation}

Note that by the inclusion-exclusion principle
\begin{equation}
    \mu^{\otimes n}\rk{N_{x,n}=1}=nG(x^-)+\sum_{k=2}^n(-1)^{k-1}\binom{n}{k}G(x^-)^k=nG(x^-)\rk{1+\Ocal\rk{nG(x^-)}}\, .
\end{equation}
Hence,
\begin{multline}\label{Equationalimentary}
\int \frac{\1_{N_{x,n}=1}}{n{G}(x^-)}\log\left[ \frac{1}{2 n{G}(x^-)}+\frac{1}{2{G}_n(x)}\right]\d\mu^{\otimes n}=\left(1+\Ocal\rk{G(x^-)n}\right)\log\left[ \frac{1}{2 n{G}(x^-)}+\frac{1}{2{G}_n(x)}\right]\\
    =-\log\left[nG(x)\right]\left(1+\Ocal\left(\mathrm{err}\hk{1}[x,\omega(a_n)]+c_{x,n}+G(x^-)n\right)\right)\, .
\end{multline}
Indeed,
\begin{equation}
    {G}(x^-)=G(x)\left(1+\mathrm{err}\hk{1}[x,\omega(a_n)]\right)\, ,
\end{equation}
and
\begin{equation}
    {G}_n(x)=nG(x)\left(1+\Ocal(c_{x,n})\right)\, .
\end{equation}
The error term is given by
\begin{equation}
    \begin{split}
       \int \frac{N_{x,n}\1_{N_{x,n}>1}}{n{G}(x)}\log\left[\frac{\d\nu_{x,n}^*}{2\d\mu^{\otimes n}}+\frac{1}{2{G}_n(x)}\right]\d\mu^{\otimes n}\le C\sum_{k=2}^n \left[n{G}(x)\right]^{k-1}k\binom{n}{k}\log[n{G}(x)]\, ,
    \end{split}
\end{equation}
and thus (noting that the term $k=2$ dominates)
\begin{equation}
     \int\frac{\d\nu_{x,n}}{\d\mu^{\otimes n}} \log\left[\frac{\d\nu_{x,n}}{2\d\mu^{\otimes n}}+\frac{\d\nu_{x,n}^*}{2\d\mu^{\otimes n}}\right]\d\mu^{\otimes n}=-\log\left[nG_n(x)\right]\left(1+\Ocal\left(\mathrm{err}\hk{1}[x,\omega(a_n)]+c_{x,n}+n{G}(x)\right)\right)\, .
\end{equation}
For the second term, note that
\begin{equation}
    \int \frac{\d\nu_{x,n}^*}{\d\mu^{\otimes n}}\log\left[\frac{\d\nu_{x,n}}{2\d\mu^{\otimes n}}+\frac{\d\nu_{x,n}^*}{2\d\mu^{\otimes n}}\right]\d\mu^{\otimes n}= \int \frac{\1\{S_n>x\}}{{G}_n(x)}\log\left[\frac{N_{x,n}}{2n{G}(x)}+\frac{1}{2{G}_n(x)}\right]\d\mu^{\otimes n}\, .
\end{equation}
Note that
\begin{equation}
    \int \frac{\1\{S_n>x, \, N_{x,n}=0\}}{{G}_n(x)}\log\left[\frac{N_{x,n}}{2n{G}(x)}+\frac{1}{2{G}_n(x)}\right]\d\mu^{\otimes n}=\Ocal\left({c_{n,x}\log\left(nG(x)\right)}\right)\, .
\end{equation}
On the other hand,
\begin{equation}
    \int \frac{\1\{S_n>x, \, N_{x,n}\ge 2\}}{{G}_n(x)}\log\left[\frac{N_{x,n}}{2n{G}(x)}+\frac{1}{2{G}_n(x)}\right]\d\mu^{\otimes n}=\Ocal\left(\log\left(nG(x)\right)nG(x)\right)\, ,
\end{equation}
similar to before. We estimate the final contribution
\begin{equation}
   \int \frac{\1\{S_n>x, \, N_{x,n}=1\}}{{G}_n(x)}\log\left[\frac{1}{2n{G}(x)}+\frac{1}{2{G}_n(x)}\right]\!\d\mu^{\otimes n}\!=-\log\left[G_n(x)\right]\left(1+\Ocal\left(\mathrm{err}\hk{1}[x,\omega(a_n)]+c_{x,n}\right)\right) .
\end{equation}
This is done analogously to Equation \eqref{Equationalimentary}. Combining the above bounds yields that
\begin{equation}
    \mathrm{C}=-2\log\left[G_n(x)\right]\left(1+\Ocal\left(\mathrm{err}\hk{1}[x,\omega(a_n)]+c_{x,n}+n{G}(x)\right)\right) \, .
\end{equation}
As $\norm{\nu_{x,n}-\nu_{x,n}^*}^2\le \mathrm{A}+\mathrm{B}-\mathrm{C}$, we have using Equation \eqref{EquationAplusB}
\begin{equation}
     \norm{\nu_{x,n}-\nu_{x,n}^*}^2\le\Ocal\left(\mathrm{err}\hk{1}[x,\omega(a_n)]+c_{x,n}+n{G}(x)\right)\, .
\end{equation}
This concludes the proof of Theorem \ref{Thm2}.\qed

The proof of Theorem \ref{Thm3} can now be carried out in exactly the same manner: note that for $y\in\R^n$
\begin{equation}
    \frac{\d\xi_{x,n}^*}{\d\mu^{\otimes n}}(y)=\frac{\1\{S_{n-1}(y)=x-y_n\}}{\P(X_n=S_{n-1}-x)}\, .
\end{equation}
By conditioning on $X_n$, we can establish
\begin{equation}
  \P(X_n=S_{n-1}-x)=nG(x)\left(1+\mathrm{err}\hk{2}[\omega(x,a_n)]+c_{n,x}\right)\, .
\end{equation}
On the other hand,
\begin{equation}
     \frac{\d\xi_{x,n}}{\d\mu^{\otimes n}}(y)=\frac{\1\{S_{n}(y)=x\}}{\P(S_n=x)}\, .
\end{equation}
We can now apply the error bounds from Theorem \ref{Thm1} together with the method from the proof of Theorem \ref{Thm2} to conclude the proof.
\section*{Acknowledgements}
The author would like to express his gratitude for the anonymous referee for suggesting several improvements for this paper, most notably the extension from $\alpha\in(0,2)$ to $\alpha>0$. The author would also like the thank the referee for pointing out a calculation mistake in an earlier version of the paper. The author would like to thank Quentin Berger for his help, answering my questions both quickly and patiently. The author would also like to thank Silke Rolles and Julius Damarackas for their help regarding typos and presentation.

\end{document}